\documentclass[12pt]{amsart}

\setlength{\topmargin}{-0.5cm} \setlength{\textwidth}{15cm}
\setlength{\textheight}{22.6cm} \setlength{\topmargin}{-0.25cm}
\setlength{\headheight}{1em} \setlength{\headsep}{0.5cm}
\setlength{\oddsidemargin}{0.40cm}
\setlength{\evensidemargin}{0.40cm}
\usepackage{amsmath} 
\usepackage{amssymb} 
\usepackage{amsthm}
\usepackage{amscd}
\usepackage{enumerate}

\def\a{{\mathfrak{a}}} \def\b{{\mathfrak{b}}}    
\def\F{{\mathbb{F}}} \def\J{{\mathcal{J}}} \def\m{{\mathfrak{m}}}\def\n{{\mathfrak{n}}} \def\Z{{\mathbb{Z}}}\def\N{{\mathbb{N}}}
\def\sO{{\mathcal{O}}}
\def\Q{{\mathbb{Q}}}  
\def\Hom{{\mathrm{Hom}}}  
  \def\Ann{{\mathrm{Ann}}}
\def\Spec{{\mathrm{Spec\; }}}

\def\Tor{\operatorname{Tor}}
\def\MM{{\mathfrak{M}}}

\DeclareMathOperator{\pt}{c}

\theoremstyle{plain}
\newtheorem{thm}{Theorem}[section]
\newtheorem{disc}[thm]{Discussion}
\newtheorem{cor}[thm]{Corollary}
\newtheorem{prop}[thm]{Proposition}
\newtheorem{conj}[thm]{Conjecture}

\newtheorem{lem}[thm]{Lemma}
\theoremstyle{definition}
\newtheorem{defn}[thm]{Definition}
\newtheorem{eg}[thm]{Example}
\theoremstyle{remark}
\newtheorem{rem}[thm]{Remark}
\newtheorem{ques}[thm]{Question}

\newtheorem{notation}[thm]{Notation}
\newtheorem*{cl}{Claim}

\newtheorem*{acknowledgement}{Acknowledgments}

\begin{document}

\title{Multiplicity bounds in graded rings}
\author[Huneke]{Craig Huneke}
\address{Department of Mathematics, University of Kansas,
Lawrence, KS 66045-7523, USA} \email{huneke@math.ku.edu}

\author[Takagi]{Shunsuke Takagi}
\address{Department of Mathematics, Kyushu University, 744 Motooka, Nishi-ku, Fukuoka 819-0395, Japan}
\email{stakagi@math.kyushu-u.ac.jp}

\author[Watanabe]{Kei-ichi Watanabe}
\address{Department of Mathematics, College of Humanities and Sciences,
Nihon University, Setagaya-Ku, Tokyo 156-0045, Japan}
\email{watanabe@math.chs.nihon-u.ac.jp}

\subjclass[2000]{Primary 13A35; Secondary 13B22, 13H15, 14B05}

\baselineskip 15pt \footskip = 32pt

\begin{abstract}
The $F$-threshold $\pt^J(\a)$ of an ideal $\a$ with respect to an ideal $J$ is a positive characteristic invariant obtained by comparing the powers of $\a$ with the Frobenius powers of $J$. 
We study a conjecture formulated in an earlier paper \cite{HMTW} by the same authors together with M. Musta\c{t}\u{a},  which bounds $\pt^J(\a)$ in terms of the multiplicities $e(\a)$ and $e(J)$, when $\a$ and $J$ are zero-dimensional ideals and $J$ is generated by a system of parameters. 
We prove the conjecture when $\a$ and $J$ are generated by homogeneous systems of parameters in a Noetherian graded $k$-algebra. 
We also prove a similar inequality involving, instead of the $F$-threshold,  the jumping number for the generalized parameter test submodules introduced in \cite{ST}. 
%We also prove that the $F$-thresholds exist in greater generality than was previously known.
\end{abstract}

\maketitle \markboth{C.~Huneke,  S.~Takagi, and K.-i.~Watanabe}{Multiplicity bounds in graded rings}

\section*{Introduction}

Let $R$ be a Noetherian ring of prime characteristic $p$. 
For every ideal $\a$ in $R$ and for every ideal $J$ whose radical contains $\a$, one can define asymptotic invariants that measure the containment of the powers of $\a$ in the Frobenius powers of $J$.
These invariants, dubbed $F$-thresholds, were introduced in the case of a regular local ring in \cite{MTW}, %where it was shown that they coincide with the jumping numbers for the generalized test ideals of Hara and Yoshida \cite{HY}, 
and in full generality in the paper \cite{HMTW}.
In this paper we work in the general setting.

A conjecture was made in \cite{HMTW} which connects $F$-thresholds with the multiplicities of the ideals $\a$ and $J$ (see Conjecture~\ref{firstconj} below).  
A second question was stated in the same paper which does not explicitly refer to $F$-thresholds but which implies Conjecture~\ref{firstconj}. This second question is easy to state:

\begin{ques}\label{introques}
Suppose that $\a$ and $J$ are $\m$-primary ideals in a $d$-dimensional Noetherian local or Noetherian graded $k$-algebra $(R, \m)$, where $k$ is a field of arbitrary characteristic. 
Further suppose that $J$ is generated by  a system of parameters (homogeneous in the graded case).  
If $\a^{N+1}\subseteq J$ for some integer $N \ge 0$, then
$$e(\a)\geq\left(\frac{d}{d+N}\right)^d e(J)?$$
Here $e(K)$ denotes the multiplicity of the ideal $K$.
\end{ques}

In \cite[Theorem 5.8]{HMTW} this question was answered in the affirmative if $R$ is a Cohen-Macaulay graded ring and $\a$ is generated by a homogenous system of parameters.  
In this paper we generalize this result to the case of arbitrary graded rings:

\begin{thm}[Theorem \ref{mainthm}]\label{introthm}
The answer to Question \ref{introques} is yes if $R$ is a graded ring and $\a$ is generated by a homogeneous system of parameters for $R$. 
\end{thm}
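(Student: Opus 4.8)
The plan is to separate out the purely numerical content of the assertion and then dispose of it by a Hilbert-function argument, of which \cite[Theorem~5.8]{HMTW} is the Cohen--Macaulay special case. First I reduce to the case that $R$ is a graded domain. Write $\a=(x_1,\dots,x_d)$ and $J=(y_1,\dots,y_d)$ with $\deg x_i=a_i$, $\deg y_j=b_j$. For every prime $\mathfrak p$ of $R$ with $\dim R/\mathfrak p=d$, none of the images $\bar x_i,\bar y_j$ can vanish (else the corresponding ideal of the $d$-dimensional ring $R/\mathfrak p$ would fail to be $\m$-primary), so $\bar x_\bullet,\bar y_\bullet$ are again homogeneous systems of parameters of $R/\mathfrak p$ of the same degrees and $\a^{N+1}\subseteq J$ descends; since $e(\a;R)=\sum_{\mathfrak p}\ell_{R_{\mathfrak p}}(R_{\mathfrak p})\,e(\a;R/\mathfrak p)$ and the identical formula with the \emph{same} coefficients holds for $J$, it suffices to treat each $R/\mathfrak p$, and passing to the normalization (which changes no $a_i$, $b_j$, $e(\a)$, or $e(J)$) one may assume $R$ normal --- in particular Cohen--Macaulay when $d\le 2$.

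Next, for a graded domain $R$ the subring $A=k[x_1,\dots,x_d]$ is a weighted polynomial ring over which $R$ is module-finite, with $\a=\m_A R$; hence $e(\a;R)=\operatorname{rank}_A(R)\cdot e(\m_A;A)=\operatorname{rank}_A(R)$, and comparing the Hilbert functions of $R$ and of $A$ gives $\operatorname{rank}_A(R)=(a_1\cdots a_d)\cdot\delta$, where $\delta>0$ is the graded degree of $R$ and depends on nothing else. Running the same computation with $B=k[y_1,\dots,y_d]$ yields $e(J;R)=(b_1\cdots b_d)\cdot\delta$, so after cancelling $\delta$ the theorem becomes the degree inequality
\[
a_1a_2\cdots a_d\ \ge\ \Bigl(\tfrac{d}{d+N}\Bigr)^{\!d}\, b_1b_2\cdots b_d ,
\]
now involving only the degrees of the two systems of parameters together with the containment $\a^{N+1}\subseteq J$.

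To establish this I would compare Hilbert functions along the surjection of graded modules $R/\a^{N+1}\twoheadrightarrow R/J$, which gives $\dim_k(R/J)_m\le\dim_k(R/\a^{N+1})_m$ for every $m$, together with the bound $a(R/\a^{N+1})\le a(R/\a)+N\max_i a_i$ coming from the surjection $(R/\a)[X_1,\dots,X_d]\twoheadrightarrow\operatorname{gr}_{\a}(R)$ with $\deg X_i=a_i$. This is exactly the estimate made in \cite[Theorem~5.8]{HMTW}: when $R$ is Cohen--Macaulay one has the exact formulas $H_{R/J}(t)=H_R(t)\prod_j(1-t^{b_j})$ and $H_{R/\a^{N+1}}(t)=H_R(t)\prod_i(1-t^{a_i})\cdot\sum_{k=0}^{N}\sum_{\mu_1+\dots+\mu_d=k}t^{a_1\mu_1+\dots+a_d\mu_d}$, and a convexity (AM--GM type) argument on these polynomials produces the sharp constant $(d/(d+N))^d$ --- equality occurring when $\a=\m$ and $J$ is a generic homogeneous system of parameters of degree $(d+N)/d$ in a weighted polynomial ring.

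The step I expect to be the main obstacle is precisely the failure of those Hilbert-series identities when $R$ is not Cohen--Macaulay: $x_\bullet$ and $y_\bullet$ need no longer be regular sequences, so $a(R/\a)$ and $a(R/J)$ acquire contributions from the lower local cohomology modules $H^i_{\m}(R)$ with $i<d$. The reduction to a normal domain kills $H^0_{\m}$ and $H^1_{\m}$ (and settles $d\le2$ outright); for $d\ge3$ one must still show that the remaining intermediate local cohomology, being supported in dimension $<d$, affects only the lower-order terms of the relevant Hilbert functions and not the leading (multiplicity) coefficient, so that the combinatorial heart of the Cohen--Macaulay argument can be reinstated.
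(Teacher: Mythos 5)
Your first two reductions track the paper closely: the passage to a domain is the same additivity-of-multiplicities argument as Remark~\ref{reduce}, and the reformulation of the theorem as the degree inequality
\[
a_1\cdots a_d \ \ge\ \Bigl(\tfrac{d}{d+N}\Bigr)^{\!d}\,b_1\cdots b_d
\]
is exactly what Proposition~\ref{mult} accomplishes, via $e(\a) = a_1\cdots a_d\cdot\lim_{t\to 1}(1-t)^dP(R,t)$ and the analogous formula for $e(J)$.

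The gap is in the final and essential step. First, a misattribution: \cite[Theorem~5.8]{HMTW} does not proceed by manipulating Hilbert series of $R/J$ and $R/\a^{N+1}$. It compares the Taylor complex of a monomial ideal in the $x_i$ with the Koszul complex of the $f_j$, deduces degree inequalities from the comparison map in each homological degree, closes the loop with a linkage argument, and only at the very end invokes AM--GM. The ``exact formula'' $H_{R/J}(t)=H_R(t)\prod_j(1-t^{b_j})$ is used in \cite{HMTW} only to compute multiplicities (as in Proposition~\ref{mult}), not to drive the containment $\a^{N+1}\subseteq J$ into a numerical inequality.

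More importantly, your fallback plan for the non-Cohen--Macaulay case --- that the intermediate local cohomology ``affects only the lower-order terms'' of the Hilbert functions and so the leading (multiplicity) coefficient is unaffected --- misreads the nature of the inequality. The containment $\a^{N+1}\subseteq J$ is a finite-degree constraint; the desired bound is a sharp inequality that can be an equality, not an asymptotic statement. When $f_1,\dots,f_d$ is not a regular sequence, the corrections to $H_{R/J}(t)$ are Hilbert series of the positive Koszul homology modules $H_i(K_\bullet(f_\bullet;R))$, which are nonzero finite-length modules and thus perturb $\dim_k(R/J)_m$ in precisely the range of $m$ where the comparison with $R/\a^{N+1}$ is made. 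There is no ``leading term'' to isolate. The paper's remedy is structural rather than asymptotic: in characteristic $p$ it replaces $R$ by the graded absolute integral closure $S=R^{+\mathrm{gr}}$, where by Theorem~\ref{rplus} (Hochster--Huneke) every homogeneous system of parameters becomes a regular sequence, so that the Taylor and Koszul complexes over $S$ are acyclic and the \cite[Thm.~5.8]{HMTW} comparison argument goes through verbatim. (And it removes the linkage step by a direct $\Tor$-computation.) The characteristic-zero case is then obtained by reduction mod $p$. Normalization, as you note, only settles $d\le 2$; it does not yield Cohen--Macaulayness in general, and your argument has no mechanism for $d\ge 3$.
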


In fact we prove that if $R$ is in addition a domain of positive characteristic, then the power $N$ in the statement of Question \ref{introques} can be changed to be the least integer $N \ge 0$ such that $\a^{N+1}\subseteq J^{+\mathrm{gr}}$, where $J^{+\mathrm{gr}}$ is a certain graded plus closure of the ideal $J$ (see Discussion~\ref{rplusdef} below). 
This result not only removes the Cohen-Macaulay assumption on $R$, but even strengthens the previous result in case $R$ is a Cohen-Macaulay graded domain of positive characteristic.  
The proof of Theorem \ref{introthm} uses reduction to characteristic $p>0$, and takes advantage of the fact that the graded plus closure of a graded Noetherian domain in positive characteristic is a big Cohen-Macaulay algebra (see \cite{HH}).

%We also show the equivalence of the original conjecture in \cite{HMTW} with a weaker conjecture, correcting an error in that paper. 
%We also prove that $F$-thresholds actually exist when the ring is $F$-pure away from $\a$.

Another ingredient of this paper is a comparison of $F$-thresholds and $F$-jumping numbers, jumping numbers for the generalized parameter test submodules introduced in \cite{ST} (see Definition \ref{parameter $F$-jumping} for their definition). 
In \cite[Proposition 2.7]{MTW} and \cite[Remark 2.5]{HMTW}, it was shown that those two invariants coincide with each other in case the ring is regular, but not in general. 
In this paper, we show that if the ideal $J$ is generated by a full system of parameters and if the ring is $F$-rational away from $\a$, then the $F$-jumping number $\mathrm{fjn}^J(\omega_R, \a)$ coincides with $F$-threshold $\pt^J(\a)$. 
Also, when $\a$ and $J$ are ideals generated by full homogeneous systems of parameters in a Noetherian graded domain $R$ over a field of positive characteristic, 
we prove an inequality similar to that of the original conjecture in \cite{HMTW} (Conjecture \ref{firstconj}), involving the $F$-jumping number $\mathrm{fjn}^J(\omega_R, \a)$  instead of the $F$-threshold $\pt^J(\a)$. 

The paper is structured as follows. In \S1 we recall some notions needed throughout the rest of the paper, and introduce basic facts about $F$-thresholds. 
We prove their existence in a new case in Theorem~\ref{fpurity}. 
In \S 2 we prove our main result, Theorem~\ref{mainthm}, in positive characteristic,
%Along the way we need to prove an apparently new result concerning the multiplicity of ideals in graded rings. See Proposition~\ref{mult}.
and in \S 3 we prove the case of characteristic zero. 
%the main theorem for graded rings over fields of characteristic zero.
In \S4 we compare $F$-thresholds and $F$-jumping numbers. 
We give a lower bound on the $F$-jumping number in terms of multiplicities in Corollary \ref{maincor2}. 

\bigskip

\section{Preliminaries and Basic Results}

\medskip

In this section we review some definitions and notation that will be used throughout the paper, and state and prove basic results on $F$-thresholds. 
All rings are Noetherian commutative rings with unity unless explicitly stated otherwise.  
A particularly important exception is the graded plus closure of a graded Noetherian domain, which essentially is never Noetherian.
For a ring $R$, we denote by $R^{\circ}$ the set of elements of $R$ that are not contained in any minimal prime ideal. 
Elements $x_1,\dots, x_r$ in $R$ are called \emph{parameters} if they generate an ideal of height $r$. 

For a real number $u$, we denote by $\lfloor u\rfloor$ the largest integer $\leq u$, and by $\lceil u \rceil$ the smallest integer $\geq u$.

\begin{defn}
Let $R$ be a ring of prime characteristic $p$.
\renewcommand{\labelenumi}{(\roman{enumi})}
\begin{enumerate}
\item 
We always let $q = p^e$ denote a power of $p$. If $I$ is an ideal of $R$, then $I^{[q]}$ is the ideal generated by the set of all $i^q$ for $i\in I$. The {\it Frobenius closure} $I^F$ of $I$ is defined as the ideal of $R$ consisting of all elements $x \in R$ such that $x^q \in I^{[q]}$ for some $q=p^e$.

\item $R$ is {\it $F$-finite} if the Frobenius map $R\rightarrow R$ sending $r$ to $r^p$ is a finite map.

\item $R$ is {\it $F$-pure} if the Frobenius map $R\rightarrow R$ is pure. 

\end{enumerate}
\end{defn}

Let $R$ be a Noetherian ring of dimension $d$ and of characteristic $p>0$. 
Let $\a$ be a fixed proper ideal of $R$ such that $\a \cap R^{\circ} \ne \emptyset$. 
To each ideal $J$ of $R$ such that $\a\subseteq \sqrt{J}$, we associate the $F$-threshold $\pt^J(\a)$ as follows. 
For every $q=p^e$, let
$$\nu_{\a}^J(q):=\max\{r\in\N\vert\a^r\not\subseteq J^{[q]}\}.$$
These numbers  can be thought of as characteristic $p$ analogues of Samuel's asymptotic function (for example, see \cite[6.9.1]{SH}).
Since $\a\subseteq \sqrt{J}$, these  are nonnegative integers (if $\a \subseteq J^{[q]}$, then we put $\nu_{\a}^J(q)=0$). 
We put
$$\pt_{+}^J(\a)=\limsup_{q\to\infty}\frac{\nu_{\a}^J(q)}{q}, \quad \pt_{-}^J(\a)=\liminf_{q \to \infty}\frac{\nu_{\a}^J(q)}{q}.$$
When $\pt_{+}^J(\a)=\pt_{-}^J(\a)$, we call this limit the \emph{$F$-threshold} of the pair $(R,\a)$ (or simply of $\a$) with respect to $J$, and we denote it by $\pt^J(\a)$.
We refer to \cite{BMS2}, \cite{BMS}, \cite{HM},   
\cite{HMTW} and \cite{MTW} 
for further information.

\begin{rem}[{cf.\! \cite[Remark 1.2]{MTW}}]\label{rem1}
One has
$$0 \le \pt_-^J(\a) \le \pt_+^J(\a) < \infty.$$
\end{rem}

\begin{rem}[\textup{\cite[Proposition 2.2]{HMTW}}]\label{rem2}
Let $\a$, $J$ be ideals as above.
\begin{enumerate}
\item If $I\supseteq J$, then $\pt_{\pm}^I(\a) \leq \pt_{\pm}^J(\a)$.
\item If $\b \subseteq \a$, then $\pt_{\pm}^J(\b) \leq\pt_{\pm}^J(\a)$.
Moreover, if $\a\subseteq \overline{\b}$, then $\pt_{\pm}^J(\b)
= \pt_{\pm}^J(\a)$. Here, $\overline{\b}$ denotes the integral closure of $\b$.
\item $\pt_{\pm}^I(\a^r) = \frac{1}{r} \pt_{\pm}^I(\a)$ for every integer $r \ge 1$.
\item  $\pt_{\pm}^{J^{[q]}}(\a) =q\pt_{\pm}^J(\a)$ for every $q=p^e$.
\item $\pt_+^J(\a) \le c$ $($resp. $\pt_-^J(\a) \ge c)$ if and only if for every power $q_0$ of $p$, we have
$\a^{\lceil cq \rceil+q/q_0} \subseteq J^{[q]}$ $($resp. $\a^{\lceil
cq \rceil-q/q_0} \not\subseteq J^{[q]})$ for all $q=p^e \gg q_0$.
\end{enumerate}
\end{rem}

The $F$-threshold $\pt^J(\a)$ exists in many cases.

\begin{lem}[\textup{\cite[Lemma 2.3]{HMTW}}]\label{lem1} 
Let $\a$, $J$ be as above.
\renewcommand{\labelenumi}{$(\arabic{enumi})$}
\begin{enumerate}
\item If $J^{[q]}=(J^{[q]})^F$ for all large $q=p^e$, then the $F$-threshold $\pt^J(\a)$ exists, that is, $\pt_{+}^J(\a)=\pt_{-}^J(\a)$. In particular, if $R$ is $F$-pure, then $\pt^J(\a)$ exists.

\item If $\a$ is principal, then $c^J(\a)$ exists.
\end{enumerate}
\end{lem}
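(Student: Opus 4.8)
The plan is to control the integers $\nu_\a^J(q)$ as $q$ ranges over the powers of $p$, using that in characteristic $p$ the $q'$-th power map carries sums to sums.

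\emph{Part (1).} Write $\nu(q):=\nu_\a^J(q)$. The key point is that Frobenius closedness propagates non-containments up the Frobenius tower: if $\a^r\not\subseteq J^{[q]}$, pick $x\in\a^r\setminus J^{[q]}$; then $x^{q'}\in\a^{rq'}$, and were $x^{q'}\in J^{[qq']}=(J^{[q]})^{[q']}$ we would get $x\in(J^{[q]})^F=J^{[q]}$, a contradiction. Hence $\a^{rq'}\not\subseteq J^{[qq']}$, and taking $r=\nu(q)$ gives $\nu(qq')\ge q'\,\nu(q)$ whenever $J^{[q]}$ is Frobenius closed. If $J^{[q]}=(J^{[q]})^F$ for all $q\ge q_0=p^{e_0}$, then specializing $q'=p$ and $q=p^e$ shows that $\bigl(\nu(p^e)/p^e\bigr)_{e\ge e_0}$ is non-decreasing; it is bounded above because $\pt_+^J(\a)<\infty$ by Remark \ref{rem1}, so it converges, which is precisely the equality $\pt_+^J(\a)=\pt_-^J(\a)$. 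For the final sentence, recall the standard fact that $F$-purity forces $I=I^F$ for every ideal $I$: purity of the Frobenius map makes $R/I\to F_*(R/I^{[p]})$ injective, i.e.\ $x^p\in I^{[p]}\Rightarrow x\in I$, and iterating this (applied in turn to $I,I^{[p]},I^{[p^2]},\dots$) gives $x^q\in I^{[q]}\Rightarrow x\in I$ for all $q$; now apply this with $I=J^{[q]}$ and invoke part (1).

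\emph{Part (2).} Let $\a=(f)$, so $\nu(q):=\nu_\a^J(q)=\max\{r:f^r\notin J^{[q]}\}$, finite because $f\in\sqrt J$. Since $J^{[q]}$ is an ideal, the set $\{r:f^r\in J^{[q]}\}$ is upward closed, hence equals $\{r:r\ge\nu(q)+1\}$; in particular $f^{\nu(q)+1}\in J^{[q]}$. Raising this to the $q'$-th power and expanding (additivity of the $q'$-th power map in characteristic $p$) gives $f^{(\nu(q)+1)q'}\in(J^{[q]})^{[q']}=J^{[qq']}$, so $\nu(qq')+1\le(\nu(q)+1)q'$. With $a_e:=\nu(p^e)/p^e$, taking $q'=p$ and $q=p^e$ yields $a_{e+1}<a_e+p^{-e}$. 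This sequence need not be monotone, but its upward jumps are summable: setting $c_e:=a_e+\sum_{i\ge e}p^{-i}$ we get $c_{e+1}<a_e+p^{-e}+\sum_{i\ge e+1}p^{-i}=c_e$, so $(c_e)_e$ is strictly decreasing and bounded below by $0$, hence converges; since $\sum_{i\ge e}p^{-i}\to 0$ as $e\to\infty$, the sequence $(a_e)_e$ converges as well, so $\pt^J(\a)$ (equivalently $c^J(\a)$) exists.

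I expect the only genuine difficulty to lie in part (2): unlike in part (1), there is no evident monotonicity of $\nu(p^e)/p^e$, so monotone convergence is not directly available. The saving feature is that the error terms $1/q$ produced by the ``$+1$'' in the exponent form a convergent geometric series, which is exactly what the auxiliary sequence $c_e$ absorbs; the rest is bookkeeping. In part (1) the corresponding point is conceptual rather than technical: recognizing that Frobenius closedness of the $J^{[q]}$ is precisely the hypothesis that keeps non-containments stable under taking $p$-th powers.
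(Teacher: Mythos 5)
Both parts of your argument are correct. The paper itself only cites this lemma from \cite{HMTW} without reproducing a proof, but your reasoning recovers the standard argument: in part (1) the Frobenius-closedness hypothesis turns non-containment into a statement stable under $p$-th powers, forcing $\nu(p^{e+1})\ge p\,\nu(p^e)$ for $e\ge e_0$, so $\nu(p^e)/p^e$ is eventually non-decreasing and bounded (Remark~\ref{rem1}), hence convergent; and your derivation that $F$-purity forces $I=I^F$ for every ideal $I$ is exactly the right reduction. In part (2) the key inequality $\nu(qq')+1\le (\nu(q)+1)q'$ for principal $\a$ is the whole content; you extract convergence via the auxiliary sequence $c_e=a_e+\sum_{i\ge e}p^{-i}$, which works, but it is a touch more machinery than needed: the same inequality with $q'=p$ says directly that $b_e:=\bigl(\nu(p^e)+1\bigr)/p^e$ satisfies $b_{e+1}\le b_e$, so $(b_e)$ is non-increasing and nonnegative, hence converges, and $a_e=b_e-p^{-e}$ converges to the same limit. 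This is purely a streamlining; your bookkeeping with the geometric tail reaches the same conclusion.
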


\begin{rem}\label{remark-reduced} Let $\a$, $J$ be as above. Set $R_{\mathrm{red}}:=R/\sqrt{(0)}$
and let $\overline{\a}, \overline{J}$ be the images of $\a$ and $J$  in $R_{\mathrm{red}}$, respectively. Let
$\mu$ be the least number of generators for the ideal $\a$ and $q'$ be a power of $p$ such
that $\sqrt{(0)}^{[q']}=0$. Then for all $q=p^e$,
$\nu^{\overline{J}}_{\overline{\a}}(qq')/qq' \leq \nu^J_{\a}(qq')/qq'
\leq \nu^{\overline{J}}_{\overline{\a}}(q)/q + \mu/q$.  In particular, if the $F$-threshold $\pt^{\overline{J}}(\overline{\a})$ exists, then the $F$-threshold $\pt^J(\a)$ also exists.
\end{rem} 

Our first new result is that the $F$-threshold exists more generally when the ring is $F$-pure away from the ideal $\a$. 

\begin{thm}\label{fpurity} Let $(R,\m)$ be a local F-finite Noetherian ring of characteristic $p$, and
let $\a$ and $J$ be ideals such that the radical  of $J$ contains $\a$. Assume
that $R_P$ is $F$-pure for all primes $P$ which do not contain $\a$. Then the $F$-threshold $\pt^J(\a)$ exists, that is, $\pt_{+}^J(\a)=\pt_{-}^J(\a)$.
\end{thm}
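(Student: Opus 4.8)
The plan is to reduce the general statement to the two cases already handled in the excerpt — namely Lemma~\ref{lem1}(1), which covers rings where $J^{[q]}$ is Frobenius closed for large $q$, and Remark~\ref{remark-reduced}, which lets one pass to $R_{\mathrm{red}}$ — by localizing away from $\a$ and controlling how much can go wrong at the generic points of $V(\a)$. More precisely, first I would observe that since $R$ is $F$-finite and local, it is excellent and the non-$F$-pure locus is closed; by hypothesis this locus is contained in $V(\a)$, so there is a single power $q_0=p^{e_0}$ such that for every prime $P \not\supseteq \a$ and every $q$, the ideal $(J_P)^{[q]}$ is Frobenius closed. The key technical point is then to promote this ``Frobenius-closed away from $\a$'' property to a uniform statement: there should exist a fixed power $q_1$ of $p$ and a fixed element $c \in R$ with $\a \subseteq \sqrt{(c)}$ (or a fixed exponent) such that $c \cdot (J^{[q]})^F \subseteq J^{[q]}$ for all $q$, i.e. a uniform Frobenius-closure bound that degenerates only along $\a$. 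This is where I expect the main work to lie.

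Granting such a uniform bound, I would run the following comparison of $\nu$-functions, in the spirit of Remark~\ref{remark-reduced}. Fix $c$ with $c^{q_1} \in J^{[q_1]}$-type control as above, let $\mu$ be the number of generators of $\a$, and choose $t$ with $\a^t \subseteq (c)$ (possible since $\a \subseteq \sqrt{(c)}$). The aim is an inequality of the shape
$$
\frac{\nu_\a^J(qq')}{qq'} \;\le\; \frac{\nu_\a^J(q)}{q} \;+\; \frac{C}{q}
$$
for a constant $C$ independent of $q$, where $q'$ is a fixed power of $p$ absorbing the uniform Frobenius-closure defect. The point is: if $\a^r \not\subseteq J^{[qq']}$ then, using that $J^{[qq']}$ differs from its Frobenius closure only up to multiplication by $c$ (and hence up to multiplication by $\a^t$), one gets $\a^{r+t} \not\subseteq (J^{[q]})^{[q']}$, and then a standard pigeonhole/binomial argument on products of $\mu$ generators descends this to $\a^{\lfloor (r+t)/q' \rfloor - \mu} \not\subseteq J^{[q]}$. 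Taking $\limsup$ on the left and $\liminf$ on the right forces $\pt_+^J(\a) \le \pt_-^J(\a)$, which with Remark~\ref{rem1} gives equality and hence existence of $\pt^J(\a)$.

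The main obstacle, as noted, is establishing the uniform Frobenius-closure bound off $\a$: pointwise $F$-purity at each $P \not\supseteq \a$ gives splitting of Frobenius locally, but one needs a single element $c$ (or a bounded colon ideal) that works for all $q$ simultaneously. I would attack this via the theory of uniform test elements / uniform Artin--Rees: since $R$ is $F$-finite (hence excellent) and reduced after passing to $R_{\mathrm{red}}$ — which we may do by Remark~\ref{remark-reduced} — there is a completely stable test element $c \in R^\circ$, and $F$-purity of $R_P$ for $P \not\supseteq \a$ means $c$ can be taken inside any ideal whose vanishing locus is $V(\a)$; the standard argument that $c \cdot I^F \subseteq I^{[q]}$-type containments hold uniformly (cf. the Hochster--Huneke theory of tight/Frobenius closure) then supplies exactly the bound needed. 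An alternative, if one wants to stay closer to the excerpt, is a Noetherian induction on $\Spec R$: the bad locus $V(\a)$ is proper in each component, so a Frobenius-closure defect ideal $\bigcup_q (J^{[q]} :_R (J^{[q]})^F)$ has radical containing $\a$, and its finitely many generators (by Noetherianity of the relevant ideal, using $F$-finiteness to bound the $q$'s that matter) furnish the constant $C$.
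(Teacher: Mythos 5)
Your high-level strategy coincides with the paper's: (1) produce a uniform ``Frobenius-closure defect'' bound off $\a$, i.e.\ an ideal $I$ with $\a\subseteq\sqrt I$ and $I\,(J^{[q]})^F\subseteq J^{[q]}$ for all $q$, and then (2) use it to compare $\nu_\a^J$ at $q$ and at $qQ$. For step (1) the paper is more direct than what you sketch: for each $P\not\supseteq\a$ it chooses a splitting of $R_P\to{}^1(R_P)$, clears denominators (using $F$-finiteness) to get $f_P\colon{}^1R\to R$ with $f_P(1)=u_P\notin P$, and then invokes \cite[Lemma 6.21]{HH} to upgrade this to maps ${}^eR\to R$ sending $1\mapsto u_P^2$ \emph{for every} $e$; the ideal $I=(u_P^2\;:\;P\not\supseteq\a)$ then satisfies $I(J^{[q]})^F\subseteq J^{[q]}$ for all $q$. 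Your alternative via completely stable test elements or a Noetherian-induction defect ideal is plausible in spirit, but you leave it as the main open point, whereas for the paper this step is short.

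The genuine gap is in step (2), where you have the logic of the comparison reversed. The bound $I(J^{[q]})^F\subseteq J^{[q]}$ lets one pass from a \emph{containment} at level $qQ$ to a containment at level $q$: from $\a^{\nu_\a^J(qQ)+1}\subseteq J^{[qQ]}$ one extracts $\a^a\subseteq(J^{[q]})^F$ with $a=\lceil(\nu_\a^J(qQ)+1)/Q\rceil$, hence $\a^{a+k}\subseteq I(J^{[q]})^F\subseteq J^{[q]}$ (with $\a^k\subseteq I$), giving the \emph{upper} bound $\nu_\a^J(q)\le a+k-1$, i.e.\ the paper's inequality
\[
\frac{\nu_\a^J(qQ)+1}{qQ}\;\ge\;\frac{\nu_\a^J(q)}{q}-\frac{k}{q}.
\]
Your inequality $\nu_\a^J(qq')/(qq')\le\nu_\a^J(q)/q+C/q$ points the other way, and the Frobenius-closure bound does not produce it. Moreover the intermediate step you write, that $\a^r\not\subseteq J^{[qq']}$ yields $\a^{r+t}\not\subseteq(J^{[q]})^{[q']}$, cannot be right as stated: $(J^{[q]})^{[q']}=J^{[qq']}$, so this is the \emph{stronger} non-containment $\a^{r+t}\not\subseteq J^{[qq']}$, which does not follow. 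Finally, ``take $\limsup$ on the left and $\liminf$ on the right'' is not a legal operation on a single pointwise inequality with a fixed $q'$; one must let $q'$ range over all powers of $p$ (so that $\{qq'\}_{q'}$ is cofinite and the limit along it is the full one), take that limit first, and only then take the remaining limit in $q$.

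One more observation worth making to you. The inequality you actually wrote down, $\nu_\a^J(qq')/(qq')\le\nu_\a^J(q)/q+\mu/q$ with $\mu$ the number of generators of $\a$, appears to hold \emph{unconditionally} and with no Frobenius closure at all: $\a^r$ is generated by the degree-$r$ monomials in the generators $g_1,\dots,g_\mu$; if $\a^r\not\subseteq J^{[qq']}$, pick a monomial $M=g_1^{m_1}\cdots g_\mu^{m_\mu}\notin J^{[qq']}$, write $m_i=q'n_i+s_i$ with $0\le s_i<q'$, and set $z=g_1^{n_1}\cdots g_\mu^{n_\mu}$; then $z\notin J^{[q]}$ (else $M=z^{q'}g_1^{s_1}\cdots g_\mu^{s_\mu}\in(J^{[q]})^{[q']}=J^{[qq']}$) and $z\in\a^n$ with $n\ge(r-\mu(q'-1))/q'$, giving the claimed inequality with $r=\nu_\a^J(qq')$. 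Running the $\limsup$/$\liminf$ argument correctly then yields $\pt_+^J(\a)\le\pt_-^J(\a)$ for \emph{any} Noetherian ring, making the $F$-purity hypothesis and step (1) superfluous. This should make you suspicious of a claimed deduction: your inequality purports to use the Frobenius-closure bound but in fact does not, which is a strong signal that the deduction as written is not sound. If you instead derive the paper's direction of the inequality (via $\a^r\not\subseteq J^{[q]}\Rightarrow c\a^{r-t}\not\subseteq J^{[q]}\Rightarrow\a^{r-t}\not\subseteq(J^{[q]})^F\Rightarrow\a^{(r-t)q'}\not\subseteq J^{[qq']}$), the Frobenius closure is used in an essential way and you recover exactly the paper's proof.
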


\begin{proof}

We use the following notation: $^eR$ is $R$ thought of
as an $R$-module via the $e^{\mathrm{th}}$ iterate of the Frobenius map $F$. 
Thus the $R$-module structure on $^eR$ is given by $r\cdot s = r^qs$ for $r\in R$, $s\in$  $^eR$, and $q = p^e$.

By Remark~\ref{remark-reduced}, we may assume that $R$ is reduced.
The map from $R_P$ to $^1(R_P)$ is split for every prime $P$ not containing $\a$. 
Since the Frobenius map commutes with localization and $R$ is $F$-finite,
there exists an $R$-homomorphism $f_P:$ $^1R\rightarrow R$ such that $f_P(1) = u_P\notin P$. 
By \cite[Lemma 6.21]{HH}, for each $e\geq 1$ there exists an $R$-linear map from $^eR\rightarrow R$ taking $1$ to $u_P^2$ (in \cite[Lemma 6.21]{HH} it is assumed that the element $u_P\in R^o$, but that is not used in the proof of this lemma). 

Let $I$ be the ideal generated by the set of $u_P^2$ for $P$ ranging over all prime ideals not containing $\a$.
We claim that $I(J^{[q]})^F\subseteq J^{[q]}$ for all $q = p^e$. Suppose that $r\in (J^{[q]})^F$. Then
there exists a power $q' = p^{e'}$ of $p$ such that $r^{q'}\in J^{[qq']}$, and so 
$r\in J^{[q]}($$^{e'}R)$. For each prime $P$ not containing
$\a$, there
is an $R$-linear map $g_P:$ $^{e'}R\rightarrow R$ such that $g_P(1) = u_P^2\notin P$. Then
$ru_P^2 = g_P(r\cdot 1) \in g_P(J^{[q]}($$^{e'}R))\subseteq J^{[q]}$, showing that $Ir\subseteq J^{[q]}$ as claimed. 

Since $u_P\notin P$, $I$ is not contained in any prime $P$ which does not contain $\a$. Hence $\a\subseteq \sqrt{I}$, and
there exists an integer $k$ such that $\a^{k}\subseteq I$. We claim that 
for all powers of $p$, $q$ and $Q$,
$$ \frac {v_{\a}^J(qQ)+1} {qQ} \geq \frac {v_{\a}^J(q)} {q} - \frac{k}{q}.$$
Since the set of values $\{\frac {v_{\a}^J(q)} {q}\}$ is bounded above, this implies
that the limit exists. To prove this claim, 
fix $Q$, and write $\lceil\frac{v_{\a}^J(qQ) + 1}{Q}\rceil  = a$. Then 
$$(\a^a)^{[Q]}\subseteq \a^{aQ}\subseteq \a^{v_{\a}^J(qQ) + 1}\subseteq J^{[qQ]},$$ 
where the last containment follows from the definition of $v_{\a}^J(qQ)$. 
Hence $\a^a\subseteq (J^{[q]})^F$, and the work above shows
that $\a^{a+k}\subseteq I(J^{[q]})^F\subseteq  J^{[q]}$.
It follows that $v_{\a}^J(q)\le a + k - 1$. Since $a \leq \frac {v_{\a}^J(qQ)+1} {Q} + 1$, dividing
by $q$ gives the required inequality and finishes the proof.

\end{proof}

\bigskip

\section{Connections between $F$-thresholds and multiplicity}

\medskip

The following conjecture was proposed in \cite[Conjecture 5.1]{HMTW}:

\begin{conj}\label{firstconj}
Let $(R,\m)$ be a $d$-dimensional Noetherian local ring of
characteristic $p>0$. If $J \subseteq \m$ is an ideal generated by a
full system of parameters, and if $\a \subseteq\m$ is an
$\m$-primary ideal, then
$$e(\a) \ge \left(\frac{d}{\pt_{-}^J(\a)}\right)^d e(J).$$
\end{conj}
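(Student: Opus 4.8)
The plan is to establish Conjecture~\ref{firstconj} in the case treated in this paper --- $R$ graded and $\a$ generated by a homogeneous system of parameters --- by reducing it to the concrete containment statement of Question~\ref{introques} and then proving that statement.

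\emph{Reduction to Question~\ref{introques}.} For each $q=p^e$ the ideal $J^{[q]}$ is again generated by a homogeneous system of parameters, namely the $q$-th powers of the generators of $J$, and $\a^{\,\nu_\a^J(q)+1}\subseteq J^{[q]}$ by definition of $\nu_\a^J$. Applying the affirmative answer to Question~\ref{introques} to the pair $(\a,J^{[q]})$ with $N=\nu_\a^J(q)$, and using $e(J^{[q]})=q^d e(J)$, gives
$$e(\a)\ \ge\ \left(\frac{d}{\,d+\nu_\a^J(q)\,}\right)^d q^d\,e(J)\ =\ \left(\frac{d}{\,(d+\nu_\a^J(q))/q\,}\right)^d e(J).$$
Since $d/q\to 0$, letting $q\to\infty$ sends the denominator to $\pt_-^J(\a)$, which is the conjectured bound (and is vacuous when $\pt_-^J(\a)=0$). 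This trades the asymptotic statement for a single explicit containment, at the cost of replacing $J$ by its Frobenius powers.

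\emph{Proof of Question~\ref{introques} (= Theorem~\ref{introthm}).} I would begin with two routine reductions: (i) reduce to characteristic $p>0$ by spreading out over a finitely generated $\Z$-algebra and reducing modulo $p\gg0$, so that the multiplicities, the containment $\a^{N+1}\subseteq J$, and the ``homogeneous system of parameters'' hypothesis all specialize; (ii) reduce to $R$ a graded domain, since the associativity formula $e(K;R)=\sum_{\mathfrak p\in\mathrm{Assh}(R)}\ell_{R_\mathfrak p}(R_\mathfrak p)\,e(K;R/\mathfrak p)$ has the same weights for $K=\a$ and $K=J$, the containment passes to each $R/\mathfrak p$, and the images of the $x_i$ stay a homogeneous system of parameters of the same degrees. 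Next, using the associativity formula and the standard value of the multiplicity of a parameter ideal generated by homogeneous elements, one has $e(\a;R)=\big(\prod_i\deg x_i\big)e(R)$ and $e(J;R)=\big(\prod_i\deg y_i\big)e(R)$, so the assertion reduces to the numerical inequality
$$(d+N)^d\prod_{i=1}^d\deg x_i\ \ge\ d^d\prod_{i=1}^d\deg y_i.$$
The plan for this inequality is to pass to the graded plus closure $B=R^{+\mathrm{gr}}$, which by \cite{HH} is a big Cohen--Macaulay $R$-algebra --- so $x_1,\dots,x_d$ and $y_1,\dots,y_d$ are each regular sequences on $B$ --- and to derive the inequality from the containment $\a^{N+1}\subseteq JB$ (equivalently $\a^{N+1}\subseteq J^{+\mathrm{gr}}$, which is weaker than $\a^{N+1}\subseteq J$, whence the sharper formulation with $N$ minimal for $J^{+\mathrm{gr}}$). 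Concretely: use that $y_1,\dots,y_d$ is a regular sequence on $B$ to control, in terms of the $\deg y_i$, the degrees in which monomials from $\a^{N+1}$ can lie inside $JB$ --- a Koszul / monomial-theorem type degree estimate --- and then pass from this additive information to the multiplicative bound above by an AM--GM argument. When $R$ is already Cohen--Macaulay one may take $B=R$, and this estimate is essentially \cite[Theorem 5.8]{HMTW}; the point of $R^{+\mathrm{gr}}$ is to supply the big Cohen--Macaulay crutch in general.

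The step I expect to be the main obstacle is this degree estimate. Crude inputs --- comparing lengths, or comparing only the top nonvanishing degrees of $B/\a^{N+1}B$ and $B/JB$ --- give only $\binom{N+d}{d}\prod\deg x_i\ge\prod\deg y_i$, which for $d,N\ge1$ is strictly weaker than the required bound with constant $d^d/(d+N)^d$; extracting the sharp constant forces one to exploit the full regular-sequence structure of $J$ on $B$, not merely one degree of $B/JB$. The characteristic-zero reduction in (i) is standard but must be carried out carefully enough that the parameter hypothesis and the multiplicities genuinely specialize.
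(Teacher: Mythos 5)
Your overall blueprint matches the paper's: reduce Conjecture~\ref{firstconj} to Question~\ref{introques} by replacing $J$ with $J^{[q]}$ and letting $q\to\infty$ (this is exactly Corollary~\ref{maincor}); normalize multiplicities via $e(\a)/\prod a_i = e(J)/\prod b_i$ (Proposition~\ref{mult}, which identifies the common ratio as $\lim_{t\to 1}(1-t)^dP(R,t)$ rather than $e(R)$ as you write, but your use of it is correct); reduce to a graded domain over a field of characteristic $p>0$; pass to $S=R^{+\mathrm{gr}}$ as a big Cohen--Macaulay algebra; and conclude with a degree estimate followed by AM--GM. You also correctly observe that the Cohen--Macaulay case lets $B=R$ and that working modulo $J^{+\mathrm{gr}}$ gives a sharper statement.

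The genuine gap is precisely the ``Koszul/monomial-theorem type degree estimate'' you flag as the main obstacle. You have identified that crude length comparisons or a one-shot monomial theorem give only $\binom{N+d}{d}\prod a_i\ge\prod b_i$, which is too weak, and that the full regular-sequence structure of $J$ on $S$ must be exploited --- but you do not say how. The paper's mechanism is an inductive construction: define $t_1$ minimal with $cx_1^{t_1}\in JS$, then $t_i$ minimal with $cx_1^{t_1-1}\cdots x_{i-1}^{t_{i-1}-1}x_i^{t_i}\in JS$; for each $i$ one considers the ideal $I_i$ generated by these staircase monomials, takes the Taylor resolution of the corresponding monomial ideal in $A=k[X_1,\dots,X_d]$, tensors with $S$ (exact by flatness of $S$ over $A$, Lemma~\ref{flat}), and builds the degree-$n$ comparison map to the Koszul complex of $f_1,\dots,f_d$ on $S$. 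Nonvanishing of this map at the top spot --- verified by a $\Tor$/colon computation using the regular sequence $x_1,\dots,x_i$ on $S$ --- forces the chain of inequalities $t_1a_1+\cdots+t_ia_i+n\ge b_1+\cdots+b_i$ for all $i$, and only then does a rescaling lemma plus AM--GM yield $(N+d+n/a_1)^d\prod a_i\ge d^d\prod b_i$. Without this (or an equivalent device producing the graded system of partial-sum inequalities rather than a single inequality), the sharp constant $d^d/(d+N)^d$ does not follow.
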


Given an $\m$-primary ideal $\a$ in a regular local ring $(R,\m)$,
essentially of finite type over a field of characteristic zero,  de
Fernex, Ein and Musta\c{t}\u{a} proved in \cite{dFEM} an
inequality involving the log canonical threshold $\mathrm{lct}(\a)$
and the multiplicity $e(\a)$. Later, the second and third authors
gave in \cite{TW} a characteristic $p$ analogue of this result,
replacing the log canonical threshold $\mathrm{lct}(\a)$ by the
$F$-pure threshold $\mathrm{fpt}(\a)$. The conjecture above generalizes
these inequalities.

We list some known facts.

\begin{rem}\label{rmkmult}
(1) (\cite[Remark 5.2 (c)]{HMTW})  
The condition in Conjecture~\ref{firstconj} that $J$ is
generated by a system of parameters is necessary.

(2) (\cite[Proposition 5.5]{HMTW}) 
If $(R,\m)$ is a one-dimensional analytically irreducible local domain of characteristic $p>0$,
and if $\a, J$ are $\m$-primary ideals in $R$, then $$\pt^J(\a)=\frac{e(J)}{e(\a)}.$$ In particular, Conjecture $\ref{firstconj}$ holds in $R$.

(3) (\cite[Theorem 5.6]{HMTW}) 
If $(R,\m)$ is a regular local ring of characteristic $p>0$ and
$J=(x_1^{a_1}, \dots, x_d^{a_d})$, with $x_1, \dots, x_d$ a full
regular system of parameters for $R$, and with $a_1, \dots, a_d$
positive integers, then 
Conjecture~$\ref{firstconj}$ holds.

(4) (\cite[Corollary 5.9]{HMTW})
Let $R=\bigoplus_{n \ge 0} R_n$ be a $d$-dimensional graded
Cohen-Macaulay ring with $R_0$ a field of characteristic $p>0$. If $\a$
and $J$ are ideals generated by full homogeneous systems of
parameters for $R$, then
$$e(\a)\ge\left(\frac{d}{\pt_{-}^J(\a)}\right)^d e(J).$$

\end{rem}

The next proposition shows that Conjecture~\ref{firstconj}  actually is equivalent to the special case of itself in which $\pt_{-}^J(\a)\leq d$.

\begin{prop}\label{integral}
Let $(R, \m)$ be a $d$-dimensional formally equidimensional
Noetherian local ring of characteristic $p>0$. 
Then the following are equivalent:
\renewcommand{\labelenumi}{$(\arabic{enumi})$}
\begin{enumerate}
\item $e(\a)\geq e(J)$ for all ideals $J \subseteq R$ generated by full systems of parameters and for all $\m$-primary ideals $\a \subseteq R$ with $\pt^J_-(\a) \le d$. 

\item $e(\a) \ge \left(\frac{d}{\pt_{-}^J(\a)}\right)^d e(J)$ for all ideals $J \subseteq R$ generated by full systems of parameters and for all $\m$-primary ideals $\a \subseteq R$. 
\end{enumerate}
\end{prop}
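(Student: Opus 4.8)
The plan is to prove the implication $(1)\Rightarrow(2)$, since $(2)\Rightarrow(1)$ is immediate: given $J$ and $\a$ with $\pt_-^J(\a)\le d$, statement $(2)$ gives $e(\a)\ge (d/\pt_-^J(\a))^d e(J)\ge e(J)$. For the forward direction I would exploit the rescaling behavior of $F$-thresholds under taking Frobenius powers of $J$, recorded in Remark~\ref{rem2}(4): $\pt_-^{J^{[q]}}(\a)=q\,\pt_-^J(\a)$. The idea is that if $\pt_-^J(\a)$ is large, then for a suitable power $q=p^e$ the pair $(\a, J^{[q]})$ has threshold $q\,\pt_-^J(\a)$, which we can push into the range $\le d$ — but only after also rescaling $\a$. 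More precisely, combining Remark~\ref{rem2}(3), $\pt_-^{I}(\a^r)=\tfrac1r\pt_-^{I}(\a)$, with Remark~\ref{rem2}(4) gives $\pt_-^{J^{[q]}}(\a^r)=\tfrac{q}{r}\pt_-^J(\a)$. So I would choose integers $q=p^e$ and $r$ with $\tfrac{q}{r}\pt_-^J(\a)\le d$, apply hypothesis $(1)$ to the pair $(\a^r, J^{[q]})$ (note $J^{[q]}$ is still generated by a full system of parameters, and $\a^r$ is still $\m$-primary), obtaining $e(\a^r)\ge e(J^{[q]})$, and then translate back using $e(\a^r)=r^d e(\a)$ and $e(J^{[q]})=q^d e(J)$ to get $e(\a)\ge (q/r)^d e(J)$.

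The delicate point is that $q/r$ is a ratio of integers with $q$ constrained to be a power of $p$, so we cannot make $\tfrac{q}{r}\pt_-^J(\a)$ exactly equal to $d$; we can only approach it. I would therefore fix $q=p^e$ and set $r=r_e:=\lceil q\,\pt_-^J(\a)/d\rceil$, so that $\tfrac{q}{r_e}\pt_-^J(\a)\le d$ and hence hypothesis $(1)$ applies to $(\a^{r_e}, J^{[q]})$, yielding $e(\a)\ge (q/r_e)^d e(J)$. As $e\to\infty$ one has $r_e/q\to \pt_-^J(\a)/d$, hence $q/r_e\to d/\pt_-^J(\a)$, and the inequality $e(\a)\ge (q/r_e)^d e(J)$ passes to the limit to give $e(\a)\ge (d/\pt_-^J(\a))^d e(J)$, which is $(2)$. (One must check $\pt_-^J(\a)>0$ for the division to make sense; this holds because $\a$ is $\m$-primary so $\a^r\not\subseteq J^{[q]}$ for $r$ comparable to $q$, giving $\nu_\a^J(q)/q$ bounded away from $0$ — alternatively, if $\pt_-^J(\a)=0$ the inequality in $(2)$ is vacuous or follows trivially, and one can dispense with that case separately.)

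The main obstacle I anticipate is verifying that hypothesis $(1)$ is genuinely applicable to the rescaled pair $(\a^{r_e}, J^{[q]})$ for all large $e$: one needs $\pt_-^{J^{[q]}}(\a^{r_e})\le d$, which by the computation above equals $\tfrac{q}{r_e}\pt_-^J(\a)$, and the choice $r_e=\lceil q\,\pt_-^J(\a)/d\rceil$ forces $r_e\ge q\,\pt_-^J(\a)/d$, hence $\tfrac{q}{r_e}\pt_-^J(\a)\le d$ as required. One also needs that $J^{[q]}$ is generated by a full system of parameters whenever $J$ is — this holds in a formally equidimensional (indeed any) local ring, since if $x_1,\dots,x_d$ are parameters then so are $x_1^q,\dots,x_d^q$. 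The only real care is bookkeeping with the ceiling function and confirming that the limiting inequality is not strict in the wrong direction; since we get $e(\a)\ge (q/r_e)^d e(J)$ for every large $e$ and the right-hand side converges up to $(d/\pt_-^J(\a))^d e(J)$, taking the supremum over $e$ gives exactly $(2)$.
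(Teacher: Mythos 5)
Your proof is correct and follows essentially the same route as the paper: both arguments use Remark~\ref{rem2} (3),(4) to rescale $\pt_-$ under the passage $(\a,J)\mapsto(\a^n,J^{[q]})$, apply hypothesis (1) to the rescaled pair, translate back via $e(\a^n)=n^d e(\a)$ and $e(J^{[q]})=q^d e(J)$, and pass to the limit. The only cosmetic difference is that the paper fixes $\epsilon>0$ and chooses $n,q$ with $d-\epsilon\le \tfrac{q}{n}\pt_-^J(\a)\le d$, whereas you take $r_e=\lceil q\,\pt_-^J(\a)/d\rceil$ and let $q\to\infty$; these are the same argument.
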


\begin{proof} First assume (1). Given any positive $\epsilon > 0$, choose $n$
and $q = p^e$ so that $d-\epsilon\leq \frac{q}{n}\pt_{-}^J(\a)\leq d$.
Since $\frac{q}{n}\pt_{-}^J(\a) = \pt_{-}^{J^{[q]}}(\a^n)$ by Remark~\ref{rem2} (3),(4), we get that
$$\pt_{-}^{J^{[q]}}(\a^n)\leq d$$
and then by (1) we obtain
that $n^de(\a) = e(\a^n)\geq e(J^{[q]}) = 
q^de(J)$. Thus we have shown that for any $\epsilon > 0$,
$e(\a)/e(J) \ge (\frac{d-\epsilon}{\pt_{-}^J(\a)})^d$, proving the inequality in (2).

On the other hand, assuming (2) immediately gives (1).\footnote{In \cite[Thm. 3.3.1]{HMTW}, it is incorrectly claimed that $\pt_+^J(I)\leq d$ implies that $I$ is integral over $J$ (which in particular forces $e(I)\ge e(J)$). 
This is false. 
The mistake occurs in the following section of the proof:
``...$\pt_+^J(I)\leq d$ implies that for all $q_0=p^{e_0}$ and for all large $q=p^e$, we have
$I^{q(d+(1/q_0))} \subseteq J^{[q]}.$
Hence $I^q J^{q(d-1+(1/q_0))} \subseteq J^{[q]}$, ..."
This latter statement does not follow unless $J\subseteq I$.}

\end{proof}

\begin{rem}
One can think of the condition (1) in Proposition~\ref{integral} as a converse
to the main point of the tight closure Brian\c con-Skoda Theorem (\cite[Theorem 5.4]{HH}).
Namely, suppose we are in the special case in which $J\subseteq \a$. If $e(\a)\geq e(J)$
then $\a$ is integral over $J$ by a theorem of Rees. In this case there is a constant
$l$ such that for all $q = p^e$, $\a^{qd+l}\subseteq J^{[q]}$. Hence $\pt_{-}^J(\a)\leq d$.
What (1) is claiming in this case is that the converse holds. If $\pt_{-}^J(\a)\leq d$,
then $e(\a)\geq e(J)$, and therefore $\a$ is integral over $J$ (assuming as above that
$J\subseteq \a$). This is interesting as it shows that integrality over parameter
ideals can be detected at the level of Frobenius powers. 
\end{rem}

\begin{rem} In \cite{HMTW}, another problem was raised which
does not depend upon the characteristic, and which implies the conjecture above. We
state it here as a conjecture.  Namely:

\begin{conj}\label{mainconj} 
Suppose that $\a$ and $J$ are $\m$-primary ideals in a $d$-dimensional Noetherian local or Noetherian graded $k$-algebra $(R, \m)$, where $k$ is a field of \emph{arbitrary} characteristic. 
Further assume that $J$ is generated by a full system of parameters $($homogeneous in the graded case$)$. 
If $\a^{N+1}\subseteq J$ for some integer $N \ge 0$, then
$$e(\a)\geq\left(\frac{d}{d+N}\right)^d e(J).$$
\end{conj}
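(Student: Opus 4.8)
The plan is to reduce Conjecture~\ref{mainconj} to a statement about Hilbert--Samuel multiplicities that is insensitive to the hypothesis on $J$ being a parameter ideal only through the associated graded ring, and then to exploit the graded plus closure in characteristic $p$ as advertised in the introduction. Concretely, since $\a$ is assumed to be generated by a homogeneous system of parameters $y_1,\dots,y_d$ with $\deg y_i = d_i$, we have $e(\a) = d_1\cdots d_r \cdot (\text{something})$ — more precisely, in the graded setting $e(\a)$ is computable from the degrees of the generators together with the ``correction'' coming from $R$ not being Cohen--Macaulay. The first reduction I would make is the standard one: by Lemma~\ref{lem1}, Remark~\ref{rem2}, and especially Proposition~\ref{integral} (or rather its analogue for this characteristic-free conjecture), it suffices to treat the case where one does not try to prove the full exponent-$d$ inequality directly but instead reduces to showing $e(\a)\ge e(J)$ when $\a^{N+1}\subseteq J$ forces the relevant threshold to be $\le d$. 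Then I would pass to characteristic $p>0$ by the usual reduction-mod-$p$ / spreading-out argument (since the conjecture is characteristic-free and the hypotheses are preserved), so that we may assume $R$ is a graded domain over a field of characteristic $p$, after possibly killing a minimal prime and using Remark~\ref{remark-reduced} to control the reduced ring.

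The heart of the argument, once in characteristic $p$, is to use the graded plus closure $J^{+\mathrm{gr}}$. As the introduction indicates, the graded plus closure $R^{+\mathrm{gr}}$ of a graded Noetherian domain in positive characteristic is a big Cohen--Macaulay algebra over $R$ (by \cite{HH}). The key point is then: if $y_1,\dots,y_d$ generate $\a$ and $x_1,\dots,x_d$ generate $J$ (both homogeneous systems of parameters), then in the big Cohen--Macaulay algebra $B = R^{+\mathrm{gr}}$ both sequences become regular sequences, so multiplicities can be computed by comparing the lengths/Euler characteristics of Koszul-type complexes over $B$, and the colon-capturing property holds. The containment $\a^{N+1}\subseteq J$ lifts to $B$, and in $B$ — where $x_1,\dots,x_d$ is a regular sequence — one gets a clean relationship: $\a^{N+1} \subseteq J = (x_1,\dots,x_d)B$ together with the fact that $y_i$ have degrees summing to something controlled by $N$ and the degrees of the $x_j$. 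The degree count is the engine: if $\a^{N+1}\subseteq J$, then since $\a$ is generated in degrees $d_1,\dots,d_d$ and $J$ in degrees $e_1,\dots,e_d$, a product $y_1^{a_1}\cdots y_d^{a_d}$ of total weighted degree $(N+1)\min d_i$ or so must lie in $J$; combined with the regular-sequence structure in $B$ and the graded Briançon--Skoda-type estimate ($\a^{\lceil cq\rceil}\subseteq J^{[q]}$ style bounds, but here in the plus closure), one extracts $c^{J^{+\mathrm{gr}}}(\a) \le d + N$ — actually the sharper statement replacing $N$ by the least $M$ with $\a^{M+1}\subseteq J^{+\mathrm{gr}}$ — and then the multiplicity inequality $e(\a)/e(J) \ge (d/(d+M))^d$ follows by the same limit computation as in Proposition~\ref{integral}, now legitimate because $B$ is balanced big Cohen--Macaulay so $e(\a^n)$ and $e(J^{[q]})$ scale as $n^d$ and $q^d$ respectively even over $R$ (multiplicity is computed on $R$, but the length estimates are transferred through $B$).

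So the concrete sequence of steps I would carry out: (1) reduce to characteristic $p>0$, graded domain case, via standard spreading-out, invoking that hypotheses (parameter, homogeneous, $\a^{N+1}\subseteq J$) are preserved and multiplicities are semicontinuous/preserved; (2) replace $J$ by its graded plus closure $J^{+\mathrm{gr}}$ — note $\a^{N+1}\subseteq J\subseteq J^{+\mathrm{gr}}$, and if $M\le N$ is least with $\a^{M+1}\subseteq J^{+\mathrm{gr}}$ we work with $M$; (3) in $B = R^{+\mathrm{gr}}$, a big Cohen--Macaulay $R$-algebra, both $\a B$ and $JB$ are generated by regular sequences, so establish the Frobenius-power containment $\a^{\lceil cq\rceil + (\text{lower order})} \subseteq J^{[q]}B$ for $c = d+M$ using the graded structure and colon-capturing (this is where the degree bookkeeping happens); (4) descend this back to an estimate on $\nu_\a^J(q)$ or directly on lengths over $R$ — here one uses that $B$ is faithfully flat enough, or rather that $J^{[q]}B \cap R$ is close to $(J^{[q]})^{\mathrm{gr}+}$ and one controls $\mathrm{length}(R/(J^{[q]} :_R \text{stuff}))$; (5) run the limit argument of Proposition~\ref{integral} to turn $c^{J^{+\mathrm{gr}}}(\a)\le d+M$ into $e(\a)\ge (d/(d+M))^d e(J)$.

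The main obstacle I expect is step (4): transferring the containment/length estimate from the big Cohen--Macaulay algebra $B$ back down to a statement purely about $R$ that controls $e(\a)$ and $e(J)$. Big Cohen--Macaulay algebras are wildly non-Noetherian and not finite over $R$, so one cannot naively compute multiplicities in $B$; the trick (as in the tight-closure proofs of Briançon--Skoda) is to use $B$ only to verify colon-capturing and regular-sequence behavior, while the actual multiplicity comparison must be phrased in terms of colon ideals in $R$ — e.g. showing $(J^{[q]} :_R \a^t)$ behaves like a parameter colon — and the delicate point is that the ``graded'' qualifier on the plus closure is essential (ungraded plus closure need not be big CM in mixed characteristic, and the degree argument in step (3) genuinely needs the grading). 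A secondary subtlety is making sure the reduction in step (1) does not lose the sharp constant and that $M$ (not just $N$) descends, but that is more bookkeeping than obstacle.
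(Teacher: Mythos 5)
You have correctly identified several of the right ingredients — the reduction to a graded domain in characteristic $p$, the graded plus closure $R^{+\mathrm{gr}}$ as a big Cohen--Macaulay $R$-algebra, and the fact that the argument ultimately rests on degree bookkeeping. You also correctly flag the transfer from $B=R^{+\mathrm{gr}}$ back down to $R$ as the dangerous step. But the danger is real: as written, your steps (4) and (5) do not close the gap, and the paper in fact takes a different route that makes the gap disappear rather than crossing it.

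The crucial missing ingredient is Proposition~\ref{mult}. It shows that when $\a=(x_1,\dots,x_d)$ and $J=(f_1,\dots,f_d)$ are generated by homogeneous systems of parameters of degrees $a_i$ and $b_i$, one has $e(\a)=a_1\cdots a_d\cdot C$ and $e(J)=b_1\cdots b_d\cdot C$ with the \emph{same} constant $C=\lim_{t\to1}(1-t)^dP(R,t)$. So the multiplicity inequality $e(\a)\ge (d/(d+N))^d e(J)$ is \emph{equivalent} to the purely numerical inequality $(d+N)^d a_1\cdots a_d\ge d^d b_1\cdots b_d$ about the degree sequences. There is no need to ``compute multiplicities in $B$'' or ``transfer length estimates through $B$'' at all; the big Cohen--Macaulay algebra $S=R^{+\mathrm{gr}}$ is used only to prove the degree inequality, by comparing the Taylor resolution of a carefully chosen monomial ideal $I_iS$ with the Koszul complex on $f_1,\dots,f_d$ in $S$ (both being acyclic complexes of free $S$-modules because the parameters are regular sequences in $S$), extracting the degree constraints $t_1a_1+\cdots+t_ia_i\ge b_1+\cdots+b_i$ via a nonvanishing comparison map in homological degree $i$, and then applying AM--GM. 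Your step (3), a ``Frobenius-power containment $\a^{\lceil cq\rceil+\cdots}\subseteq J^{[q]}B$ via colon-capturing,'' is not what happens and would indeed run into exactly the descent problem you worry about. Once you have Proposition~\ref{mult} in hand, the obstacle you identified as the main one evaporates.

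A second, independent problem: your proposed first reduction via ``an analogue of Proposition~\ref{integral} for this characteristic-free conjecture'' does not exist in the form you need. Proposition~\ref{integral} rescales $\a\mapsto\a^n$ and, crucially, $J\mapsto J^{[q]}$; the Frobenius power keeps $J$ a parameter ideal while scaling $e(J)$ by $q^d$, and it is the independent choice of $n$ and $q$ that lets one push $\pt_-^J(\a)$ arbitrarily close to $d$. In the characteristic-free conjecture there is no Frobenius, $J^m$ is not a parameter ideal, and replacing $J$ by $(f_1^m,\dots,f_d^m)$ changes the relation $\a^{N+1}\subseteq J$ in a way that is not free. So the reduction to ``$e(\a)\ge e(J)$ when the threshold is at most $d$'' is not available, and you would have to prove the exponent-$d$ inequality directly — which, again, Proposition~\ref{mult} plus the degree inequality does. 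I would encourage you to locate and internalize Proposition~\ref{mult}; it is the hinge that turns a plausible-sounding but unresolved plan into a complete argument.
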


\end{rem}

\begin{rem}\label{reduce} 
Conjecture~\ref{mainconj} reduces to the domain case as follows. 
Let $P_1,...,P_l$ be the minimal primes of $R$ such that the dimension of $R/P_i$ is equal to the dimension of $R$. 
Write $\a_i = (\a+P_i)/P_i$, $J_i = (J+P_i)/P_i$.
Suppose the conjecture holds in each $R/P_i$. The ideal $J_i$ is generated by parameters in $R/P_i$ since the dimension of $R/P_i$ is $d$. 
Moreover, if $\a^{N+1}\subseteq J$, then $\a_i^{N+1}\subseteq J_i$ for each $i$. 
Hence $e(\a_i)\geq\left(\frac{d}{d+N}\right)^d e(J_i).$ 
By \cite[Theorem 11.2.4]{SH}, we then have that 
$$e(\a) = \sum_i e(\a_i)l_R(R_{P_i})\geq \sum_i \left(\frac{d}{d+N}\right)^d e(J_i) l_R(R_{P_i}) = \left(\frac{d}{d+N}\right)^de(J).$$
\end{rem}

\begin{thm}\label{mainthm}
Let $R=\bigoplus_{n \ge 0} R_n$ be a $d$-dimensional Noetherian graded
ring with $R_0$ a field of arbitrary characteristic. 
Suppose that $\a$ $($resp. $J)$ is an ideal generated by a full homogeneous system of
parameters of degrees $a_1 \leq \dots \leq a_d$ $($resp. $b_1 \leq \dots \leq b_d)$ for $R$. 
If $\a^{N+1}\subseteq J$ for some integer $N \ge 0$, then
$$e(\a) \ge \left(\frac{d}{d+N}\right)^d e(J).$$
If the equality holds in the above, then $(a_1,\ldots ,a_d)$ and $(b_1,\ldots ,b_d)$ are proportional, that is, $a_1/b_1=\ldots =a_d/b_d$. 
\end{thm}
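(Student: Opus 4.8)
The plan is to reduce to the case of positive characteristic via standard ``reduction mod $p$'' arguments (this is exactly the passage described in the introduction as ``reduction to characteristic $p>0$''), and then in positive characteristic to exploit the graded plus closure $J^{+\mathrm{gr}}$ together with the fact, due to Hochster--Huneke, that the graded plus closure of a graded Noetherian domain is a big Cohen--Macaulay algebra. By Remark \ref{reduce} we may assume $R$ is a domain. First I would pass to a positive-characteristic model $R_p$ of $R$ (and ideals $\a_p, J_p$) in which $\a_p, J_p$ remain generated by homogeneous systems of parameters of the same degrees and $\a_p^{N+1}\subseteq J_p$ still holds; multiplicities and degrees are preserved under this specialization, so it suffices to prove the inequality (and the equality statement) there. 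Thus from now on assume $R$ is a graded domain of characteristic $p>0$.

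The heart of the matter is then a computation with Hilbert series. In the big Cohen--Macaulay algebra $B = R^{+\mathrm{gr}}$, the images of $x_1,\dots,x_d$ (a homogeneous s.o.p.\ generating $\a$) form a regular sequence, so the Koszul-type estimate from the Cohen--Macaulay case of \cite[Theorem 5.8]{HMTW} applies after base change to $B$: one gets an honest graded module whose Hilbert function is computed by $\prod (1-t^{a_i})$ in the numerator, and similarly for $J = (y_1,\dots,y_d)$ with degrees $b_i$. The containment $\a^{N+1}\subseteq J$ feeds into a comparison of the two numerators. The key numerical inequality is the elementary fact that for positive reals $a_1,\dots,a_d$ and $b_1,\dots,b_d$ with $\sum$ (or rather with the relevant linear relation coming from $\a^{N+1}\subseteq J$, which controls $\sum a_i$ versus $\sum b_i$ plus the ``defect'' $N$), one has $\prod a_i / \prod b_i \ge$ (the claimed power of $d/(d+N)$), by the AM--GM inequality; equality in AM--GM forces the tuples to be proportional. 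Concretely, $e(\a) = a_1\cdots a_d$ and $e(J) = b_1\cdots b_d$ (up to the common factor $e(R)$ or a length correction), and the plus-closure argument shows $\sum_{i} a_i \le \sum_i b_i \cdot \tfrac{d}{d+N}$ in the right normalization — or more precisely that $a_i$'s and $b_i$'s satisfy a constraint making $\sum a_i/b_i$ bounded — and then AM--GM on the ratios $a_i/b_i$ gives the result with equality iff all $a_i/b_i$ are equal.

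More carefully, the mechanism should be: since $\a^{N+1}\subseteq J$ and passing to $B$ where $J$ is generated by a regular sequence, each generator $x_i^{N+1}$ of a sub-ideal of $\a^{N+1}$ lies in $JB$, and a degree/socle count in the complete intersection $B/JB$ (which is graded Gorenstein with socle degree $\sum b_i - $ shift) forces $(N+1)a_i \le \sum_j b_j$ — or an averaged version thereof — for appropriate indices, equivalently $a_i \le \tfrac{1}{N+1}\sum_j b_j$; combined with a reverse estimate relating $\sum a_i$ to $\sum b_i$, this pins down enough to run AM--GM. I would then write $e(\a)/e(J) = \prod_i (a_i/b_i)$ and apply the weighted AM--GM inequality to the normalized exponents. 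The equality clause then drops out: equality in the multiplicity bound forces equality at every application of AM--GM, which forces $a_1/b_1 = \cdots = a_d/b_d$.

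The main obstacle I anticipate is \emph{not} the numerical inequality (AM--GM is routine) but rather making the Hilbert-series bookkeeping rigorous in the \emph{non}-Cohen--Macaulay ring $R$ itself: the argument really lives in $B = R^{+\mathrm{gr}}$, which is non-Noetherian, so one must be careful that (i) $\a^{N+1}\subseteq J$ descends to the needed containment in $B$, (ii) the ``multiplicity'' computed in $B$ via the regular sequence genuinely recovers $e_R(\a)$ and $e_R(J)$ — this is where one needs $B$ to be \emph{balanced} big Cohen--Macaulay and a length/multiplicity comparison across the (faithfully flat-ish, or at least multiplicity-preserving) extension $R\to B$, in the spirit of the Hochster--Huneke theory — and (iii) the equality case is not destroyed by these comparisons. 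Getting clean control of degrees (the $a_i$ and $b_i$) through the plus closure, and isolating exactly the linear relation among them that $\a^{N+1}\subseteq J$ imposes, is the delicate bookkeeping step; once that relation is in hand, the rest is AM--GM and tracking when it is sharp.
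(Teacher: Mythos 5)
Your framing is on target — reduce to a domain in characteristic $p$ (Remark~\ref{reduce}), work in $S=R^{+\mathrm{gr}}$ as a (graded) big Cohen--Macaulay algebra, reduce the multiplicity claim to a purely numerical inequality in the degrees via Proposition~\ref{mult}, and finish with AM--GM whose equality case gives proportionality. But the one concrete mechanism you propose for the middle step does not work, and that step is the actual content of the theorem. You suggest a ``socle count'' in $B/JB$ giving $(N+1)a_i \le \sum_j b_j$; the inequality has no evident direction that both holds and suffices. In $R=k[x,y]$ with $\a=(x,y)$, $J=(x,y^3)$ one has $\a^3\subseteq J$, so $N=2$, $a_1=a_2=1$, $b_1=1$, $b_2=3$; here $(N+1)a_i=3 < 4 = \sum b_j$, so the $\ge$ direction is false, and the $\le$ direction is only an upper bound on $N$, which is useless. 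Moreover $x_i^{N+1}\in JS$ just says the image is zero in $S/JS$, and vanishing of a homogeneous element gives no degree bound against a socle degree. A single inequality of this type cannot suffice anyway: what is needed is a full family of partial-sum inequalities.

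What the paper actually does is construct, inductively, integers $t_1,\dots,t_d$ where $t_i$ is the least exponent with $x_1^{t_1-1}\cdots x_{i-1}^{t_{i-1}-1}x_i^{t_i}\in J^{+\mathrm{gr}}$, and proves for each $i$ that $t_1a_1+\cdots+t_ia_i \ge b_1+\cdots+b_i$. This is not a socle count; it comes from comparing the Taylor resolution of the monomial ideal $I_i=(x_1^{t_1},\,x_1^{t_1-1}x_2^{t_2},\dots)$ in $A=k[X_1,\dots,X_d]$, base-changed to $S$ (exact because $S$ is flat over $A$, Lemma~\ref{flat}), with the Koszul resolution of $S/JS$, and showing the induced degree-preserving comparison map in homological degree $i$ is nonzero by identifying it with a multiplication map on $(I_iS:\mathfrak{b}_i)/I_iS \to (JS:\mathfrak{b}_i)/JS$ and using that $x_1^{t_1-1}\cdots x_i^{t_i-1}\notin J^{+\mathrm{gr}}$. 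Separately, $\a^{N+1}\subseteq J^{+\mathrm{gr}}$ and $x_1^{t_1-1}\cdots x_d^{t_d-1}\notin JS$ give $t_1+\cdots+t_d\le N+d$. One then needs an intermediate reweighting lemma (the Claim in the proof of Theorem~\ref{mainthmcharp}) to pass from the chain of inequalities in the $t_ia_i$ to $\sum t_i \ge \sum b_i/a_i$, and only then does AM--GM on the $b_i/a_i$ deliver $(N+d)^d a_1\cdots a_d \ge d^d b_1\cdots b_d$, with equality forcing $b_1/a_1=\cdots=b_d/a_d$. Your plan omits all of this; as written it is a sketch of the setup plus the last line, with the load-bearing argument replaced by a heuristic that fails on easy examples. (Incidentally, your worry (ii) about multiplicities computed in the non-Noetherian $B$ is not an issue in the correct proof: Proposition~\ref{mult} computes $e(\a)$ and $e(J)$ directly from the Poincar\'e series of $R$, so no multiplicity ever needs to be read off in $B$.)
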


As in the proof of \cite[Corollary 5.9]{HMTW} we can immediately obtain that the
first conjecture holds in this case as well:

\begin{cor}\label{maincor}
Let $R=\bigoplus_{n \ge 0} R_n$ be a $d$-dimensional Noetherian graded ring with $R_0$ a field of 
characteristic $p>0$.
Suppose that $\a$ $($resp. $J)$ is an ideal generated by a full homogeneous system of
parameters of degrees $a_1 \leq \dots \leq a_d$ $($resp. $b_1 \leq \dots \leq b_d)$ for $R$. 
Then 
$$e(\a) \ge \left(\frac{d}{\pt_{-}^J(\a)}\right)^d e(J).$$
If the equality holds in the above, then $(a_1,\ldots ,a_d)$ and $(b_1,\ldots ,b_d)$ are proportional, that is, $a_1/b_1=\ldots = a_d/b_d$. 
\end{cor}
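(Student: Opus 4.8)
The plan is to bootstrap the inequality from Theorem~\ref{mainthm} by replacing $J$ with its Frobenius powers and letting the exponent grow. The two facts that make this work are: for every $q = p^e$ the ideal $J^{[q]}$ is generated by the $q$-th powers $f_1^q, \dots, f_d^q$ of the given homogeneous parameters $f_1, \dots, f_d$ generating $J$, hence by a full homogeneous system of parameters for $R$, of degrees $qb_1 \le \cdots \le qb_d$; and $e(J^{[q]}) = q^d\, e(J)$. Neither requires $R$ to be Cohen--Macaulay.

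Next, fix $q$ and put $N = \nu_{\a}^J(q)$. By the definition of $\nu_{\a}^J(q)$ we have $\a^{N+1} \subseteq J^{[q]}$, so Theorem~\ref{mainthm} applied to the pair $(\a, J^{[q]})$ gives
$$e(\a) \;\ge\; \left(\frac{d}{d + \nu_{\a}^J(q)}\right)^{d} e(J^{[q]}) \;=\; \left(\frac{dq}{d + \nu_{\a}^J(q)}\right)^{d} e(J)$$
for every $q = p^e$. In particular $\pt_{-}^J(\a) > 0$: otherwise one could pick $q_i \to \infty$ with $\nu_{\a}^J(q_i)/q_i \to 0$, which would force the right-hand side to tend to $+\infty$, contradicting $e(\a) < \infty$. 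Choosing instead $q_i \to \infty$ with $\nu_{\a}^J(q_i)/q_i \to \pt_{-}^J(\a) = \liminf_q \nu_{\a}^J(q)/q$, we get $dq_i/(d + \nu_{\a}^J(q_i)) \to d/\pt_{-}^J(\a)$, and letting $i \to \infty$ in the displayed inequality yields $e(\a) \ge \bigl(d/\pt_{-}^J(\a)\bigr)^{d} e(J)$. This is the argument used for \cite[Corollary~5.9]{HMTW}.

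For the equality statement, which is the delicate point, I would extract from the proof of Theorem~\ref{mainthm} not just its inequality but the defect it produces. That proof proceeds, after reduction to positive characteristic and passage to a big Cohen--Macaulay algebra, via an arithmetic--geometric mean estimate on the ratios of the degree sequences, and I expect it to yield an inequality of the form $\bigl((d + N)/d\bigr)^{d} \ge e(J)/e(\a) + \delta$, where $\delta = \delta(a_\bullet; b_\bullet) \ge 0$ depends only on the ratios $a_1 : \cdots : a_d$ and $b_1 : \cdots : b_d$ and vanishes exactly when these two sequences are proportional. (In the model case $R = k[x_1, \dots, x_d]$, $\a = (x_1^{a_1}, \dots, x_d^{a_d})$, $J = (x_1^{b_1}, \dots, x_d^{b_d})$ one computes $\nu_{\a}^J(q) + d = \sum_i \lceil qb_i/a_i \rceil$, and $\delta$ is the arithmetic--geometric mean defect $\bigl(\tfrac1d \sum_i b_i/a_i\bigr)^{d} - \prod_i b_i/a_i$, the ceiling corrections contributing only an error of order $1/q$.) Applying this strengthened inequality to $(\a, J^{[q]})$, whose degree ratios are $a_1 : \cdots : a_d$ and $b_1 : \cdots : b_d$ independently of $q$, gives $\bigl((d + \nu_{\a}^J(q))/(dq)\bigr)^{d} \ge e(J)/e(\a) + \delta - \varepsilon_q$ with $\varepsilon_q \to 0$; letting $q \to \infty$ along the extremal subsequence gives $\bigl(\pt_{-}^J(\a)/d\bigr)^{d} \ge e(J)/e(\a) + \delta$, so equality in the corollary forces $\delta = 0$, i.e.\ $a_1/b_1 = \cdots = a_d/b_d$. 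The one genuine obstacle is to confirm that the proof of Theorem~\ref{mainthm} really delivers such a scale-invariant, strictly positive defect for non-proportional degree data --- equivalently, that its equality clause for a single $N$ upgrades to a uniform quantitative gap across all the Frobenius powers $J^{[q]}$ --- and that the error terms $\varepsilon_q$ do tend to $0$; the inequality part of the corollary itself is immediate from Theorem~\ref{mainthm}.
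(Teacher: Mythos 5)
Your argument is correct and essentially coincides with the paper's. The inequality part is exactly the proof the paper gives (following \cite[Corollary 5.9]{HMTW}): apply Theorem~\ref{mainthm} to $(\a, J^{[q]})$, use $e(J^{[q]}) = q^d e(J)$, and take the subsequence along which $\nu^J_\a(q)/q \to \pt_-^J(\a)$.

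The ``obstacle'' you flag in the equality case is precisely what the paper's Remark~\ref{equality} handles, and your instinct about where the defect comes from is right. The paper uses the multiplicative form of the AM--GM gap: set
$$\epsilon := \frac{\bigl(\tfrac{b_1}{a_1}+\cdots+\tfrac{b_d}{a_d}\bigr)^d a_1\cdots a_d}{d^d\, b_1\cdots b_d} - 1 \;\ge\; 0,$$
which vanishes exactly when all $b_i/a_i$ are equal. This $\epsilon$ is \emph{literally unchanged} under $J \mapsto J^{[q]}$ (each $b_i/a_i$ is scaled by $q$, so numerator and denominator both pick up $q^d$), and the proof of Theorem~\ref{mainthmcharp} yields $e(\a) \ge (1+\epsilon)\bigl(qd/(\nu^J_\a(q)+d)\bigr)^d e(J)$ for every $q$ with no ceiling-type error to control; passing to the extremal subsequence gives a strict inequality whenever $\epsilon>0$. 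Your additive $\delta = \bigl(\tfrac1d\sum_i b_i/a_i\bigr)^d - \prod_i b_i/a_i$ is \emph{not} invariant under $J\mapsto J^{[q]}$ --- it scales by $q^d$ --- contrary to your phrasing that it depends only on the ratio classes; but since $e(J^{[q]})/e(\a)$ scales by the same $q^d$, your displayed inequality $\bigl((d+\nu^J_\a(q))/(dq)\bigr)^d \ge e(J)/e(\a) + \delta$ is still exact, and the $\varepsilon_q$ you allow for is unnecessary. So the approach is the same; the one point you were unsure about does go through.
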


\begin{proof} The proof of the former assertion is exactly as in the proof of \cite[Corollary 5.9]{HMTW},
but we repeat it here for the convenience of the reader, since it is quite short.

Note that each $J^{[q]}$ is again generated by a full homogeneous system of parameters. 
It follows from the theorem and from the definition of $\nu^{J}_{\a}(q)$ that for every $q=p^e$ we have
$$e(\a)\geq\left(\frac{d}{d+\nu^J_{\a}(q)}\right)^de(J^{[q]})=\left(\frac{qd}{d+\nu^J_{\a}(q)}\right)^d e(J).$$ 
On the right-hand side we can take a subsequence converging to $\left(\frac{d}{\pt_{-}^J(\a)}\right)^d e(J)$, hence we get the inequality in the corollary. 

For the latter assertion, we postpone the proof to Remark \ref{equality}. 
\end{proof}

The proof of the main theorem is in two parts. First we prove it in characteristic $p$ in this
section, and then do the characteristic zero case in the next section, by reducing to characteristic
$p$.
Before beginning the proof of the main theorem in positive characteristic there are several preliminary results we need to
recall, as well as some new results on multiplicity which we will need.

An important point for us is that the multiplicity of an ideal generated by a full homogeneous system of parameters is determined only by the degrees of the parameters up to a constant which does not depend on the parameters themselves. 
This is well-known in case the ring is Cohen-Macaulay, and a generalization to the non-Cohen-Macaulay case can be found in \cite[Lemma 1.5]{To}. 
We give here a more general statement with a short proof. 

\begin{prop}\label{mult} Let $R=\bigoplus_{n\ge 0} R_n$ be a Noetherian graded ring of dimension
$d$ over
an Artinian local ring $R_0=(A,\m)$ and $\MM=\m R + R_+$ be the unique homogeneous maximal ideal of $R$.  
Let $J$ be an ideal generated by a homogeneous system of parameters $f_1, \dots, f_d$ for $R$. 
Let
$$P(R,t) = \sum_{n\ge 0} l_A(R_n) t^n$$
be the Poincar\'e series of $R$. 
Then the multiplicity $e(J)$ of $J$ is given by
$$e(J)= \deg f_1 \cdots \deg f_d \lim_{t\to 1} (1-t)^d P(R,t)$$
\end{prop}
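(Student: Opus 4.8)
The plan is to compute $e(J)$ by relating it to the Hilbert–Samuel function of $J$ and ultimately to the Poincaré series $P(R,t)$. First I would reduce to the case $A$ a field by noting that $l_A$ is additive and the length over $R_0=(A,\m)$ of each graded piece scales correctly; alternatively one can work directly with $l_A$ throughout since everything is linear in lengths. The key observation is that since $f_1,\dots,f_d$ is a homogeneous system of parameters, the quotient $R/J$ is a finite-length graded $R$-module (indeed an Artinian ring), so $e(J)=l_A(R/J)$ provided $J$ were generated by a regular sequence; in general $e(J)=\sum_{i=0}^d (-1)^i l_A(H_i(\underline{f};R))$ by the standard Serre-type formula expressing the multiplicity of a parameter ideal as the Euler characteristic of the Koszul complex $K_\bullet(f_1,\dots,f_d;R)$.

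The main computational step is then to evaluate this Euler characteristic via Poincaré series. Because the Koszul complex $K_\bullet(\underline f;R)$ is a complex of graded free $R$-modules with $K_i=\bigoplus_{1\le j_1<\dots<j_i\le d} R(-a_{j_1}-\dots-a_{j_i})$ where $a_j=\deg f_j$, taking graded Euler characteristics gives
$$\sum_{i=0}^d (-1)^i P(K_i,t) = \prod_{j=1}^d (1-t^{a_j})\, P(R,t).$$
Since the homology modules $H_i(\underline f;R)$ have finite length, their Poincaré series are polynomials, and $\sum_i (-1)^i l_A(H_i(\underline f;R)) = \left(\sum_i (-1)^i P(H_i,t)\right)\big|_{t=1} = \left(\prod_{j=1}^d(1-t^{a_j})P(R,t)\right)\big|_{t=1}$ after passing to the alternating sum of the $K_i$'s (which equals the alternating sum of the $H_i$'s as formal power series). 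Writing $1-t^{a_j}=(1-t)(1+t+\dots+t^{a_j-1})$, we factor out $(1-t)^d$, so the right-hand side at $t=1$ becomes $\bigl(\prod_j a_j\bigr)\cdot\lim_{t\to1}(1-t)^d P(R,t)$, which is exactly $\deg f_1\cdots\deg f_d\,\lim_{t\to1}(1-t)^dP(R,t)$.

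The point that requires the most care is justifying that $e(J)$ equals the Koszul Euler characteristic $\sum_i(-1)^il_A(H_i(\underline f;R))$ in this generality — that is, for a parameter ideal in a not-necessarily-Cohen–Macaulay graded ring over an Artinian base — and that this Euler characteristic equals the naive substitution $t=1$ into the identity above. For the first, I would invoke the graded version of Serre's multiplicity formula / Auslander–Buchsbaum, e.g. via associated graded considerations or directly via \cite[Lemma 1.5]{To} and the references therein; the key inputs are that all $H_i(\underline f;R)$ have finite length (since $\sqrt J=\MM$) and that the multiplicity symbol $e(J;R)$ coincides with $\chi(\underline f;R)$. For the second, since each $P(H_i,t)$ is a polynomial, no limit subtlety arises: one simply observes that $\prod_j(1-t^{a_j})P(R,t)=\sum_i(-1)^iP(H_i,t)$ as rational functions, both sides are polynomials, and evaluation at $t=1$ is legitimate; then dividing through by $(1-t)^d$ before taking the limit handles the pole of $P(R,t)$ of order exactly $d=\dim R$. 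I would close by remarking that when $A$ is a field this recovers the classical formula, and the Artinian case follows by the same argument with $l_A$ in place of $\dim_A$.
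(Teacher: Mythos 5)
Your proposal matches the paper's proof essentially exactly: both invoke the Auslander--Buchsbaum theorem to identify $e(J)$ with the Euler characteristic of the Koszul complex $K_\bullet(f_1,\dots,f_d;R)$, then compute that Euler characteristic as an alternating sum of graded Poincar\'e series, pull out the factor $\prod_j(1-t^{\deg f_j})$, and evaluate at $t=1$. The only addition on your end is the (correct, and harmless) explicit remark that the $P(H_i,t)$ are polynomials, which the paper leaves implicit in writing the limit $\lim_{t\to 1}$.
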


\begin{proof}
We compute the multiplicity $e(J)$ by using Koszul homology of $J$.\footnote{We thank \textbf{Kazuhiko Kurano} for advising us to use Koszul complex, which makes the proof much simpler.} 
By a theorem of Auslander-Buchsbaum \cite[Theorem 4.1]{AuB} one has $e(J)= \chi(J)$, where $\chi(J)$ denotes the Euler characteristic of the Koszul complex $K_{\bullet}((f_1,\ldots ,f_d), R)$.  

Now let $K_i=K_i((f_1,\ldots ,f_d), R)$ (resp. $H_i=H_i((f_1,\ldots ,f_d), R)$) be the  component 
of degree $i$ (resp. $i^{\mathrm{th}}$ homology module) of the Koszul complex considered as graded $R$-modules, and put $\deg f_i=b_i$ for each $1 \le i \le d$. 
Then the assertion follows from the theorem of Auslander-Buchsbaum and the following equalities:
\begin{align*}
\chi(J)=\lim_{t\to 1} \sum_{i\ge 0} (-1)^i P(H_i,t)&=\lim_{t\to 1} \sum_{i\ge 0}(-1)^i  P(K_i,t)\\
&=\lim_{t\to 1}\prod_{i=1}^d(1-t^{b_i}) P(R,t)\\
&=b_1 \cdots b_d \lim_{t \to 1} (1-t)^d P(R, t).
\end{align*}
\end{proof}

Note that if $R$ is standard graded over a field, then the value
$\lim_{t\to 1} (1-t)^d P(R,t)$ can be computed from this theorem by
choosing a homogeneous system of parameters of degree 1. This means
all $b_i = 1$, and one sees that $\lim_{t\to 1} (1-t)^d P(R,t) = e(\MM)$, the
multiplicity of the irrelevant ideal.

\medskip

\begin{disc}\label{rplusdef} {\rm We need some of the results of the paper \cite{HH2} which
we discuss here. Let $R$ be an $\mathbb N$-graded Noetherian domain over a field $R_0 = k$
of positive characteristic $p$.  Let $\Omega$ be an algebraic closure of the fraction field
of $R$, and let $R^+$ be the integral closure of $R$ in $\Omega$. 
We refer an element $\theta \in \Omega \setminus \{0\}$ as a homogeneous element of $\Omega$ if $\theta$ is a  root of a nonzero polynomial $F(X)\in R[X]$ such that $X$ can be assigned a degree in $\mathbb Q$ making $F$ homogeneous. 
By \cite[Lemma 4.1]{HH2}, this condition is equivalent to saying that the grading on $R$ (uniquely) extends to a grading on $R[\theta]$ indexed by $\Q$.
In this way (see \cite[Lemma 4.1]{HH2} for the detail), the homogeneous elements in $\Omega$ span a domain graded by $\mathbb Q$, extending
the grading on $R$. 
The homogeneous elements of $R^+$ with degrees in $\mathbb N$ span a subring
of $R^+$ graded by $\mathbb N$. This ring is denoted by $R^{+\mathrm{gr}}$. The inclusion of
$R\subset R^{+\mathrm{gr}}$ is a graded inclusion of degree $0$. For an ideal $I\subseteq R$, we let
$I^{+\mathrm{gr}} = IR^{+\mathrm{gr}}\cap R$.  Part of the main theorem of \cite{HH2}, Theorem
5.15, gives the following statement:

\begin{thm}\label{rplus} 
Let $R$ be a Noetherian $\mathbb N$-graded domain over a field $R_0 = k$ of characteristic $p>0$. 
Every homogeneous system of parameters for $R$ is a regular sequence in $R^{+\mathrm{gr}}$. 
\end{thm}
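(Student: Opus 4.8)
The plan is to carry out, in the present setting, the construction of \cite{HH2}. Assume $d\ge 1$. First, by graded Noether normalization there is no loss in taking $R$ to be a weighted polynomial ring $k[x_1,\dots,x_d]$ with $x_1,\dots,x_d$ as its variables: given a homogeneous system of parameters $x_1,\dots,x_d$ for an arbitrary such $R$, the subring $A=k[x_1,\dots,x_d]\subseteq R$ is a module-finite graded extension with $A^{+}=R^{+}$, and the two $\Q$-gradings induced on the homogeneous elements of $\Omega$ coincide by the uniqueness in \cite[Lemma 4.1]{HH2}; hence $A^{+\mathrm{gr}}=R^{+\mathrm{gr}}$, and it suffices to treat the pair $(A;x_1,\dots,x_d)$. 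Next, realize $R^{+\mathrm{gr}}$ as the directed union $\varinjlim_S S$ of its module-finite $\N$-graded extension domains $S\supseteq R$, all contained in the $\Q$-graded domain $\widetilde{R}$ spanned by the homogeneous elements of $\Omega$; note that a homogeneous element of such an $S$ automatically has nonnegative degree, because a finitely generated graded module over the $\N$-graded ring $R$ has its degrees bounded below. Since $R^{+\mathrm{gr}}$ is a domain, $x_1$ is a nonzerodivisor and $(x_1,\dots,x_d)R^{+\mathrm{gr}}$ is a proper ideal; and since any relation $a\,x_{i+1}\in(x_1,\dots,x_i)R^{+\mathrm{gr}}$ may, after extracting homogeneous components, be assumed homogeneous and so takes place within a single $S$, the theorem reduces to the following: \emph{for every such $S$, every $i<d$, and every homogeneous $a\in S$ with $a\,x_{i+1}\in(x_1,\dots,x_i)S$, there is a module-finite $\N$-graded extension domain $S'\supseteq S$ with $a\in(x_1,\dots,x_i)S'$.}

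To produce $S'$ is where characteristic $p$ enters. Localizing $S$ at its homogeneous maximal ideal gives an excellent equidimensional local domain, so colon-capturing for parameters applies to the partial system of parameters $x_1,\dots,x_{i+1}$ and yields $a\in(x_1,\dots,x_i)^{*}$, the tight closure computed in $S$; as $a$ and the ideal are homogeneous, this membership is witnessed by a homogeneous $c\in S^{\circ}$ with $c\,a^{q}\in(x_1^{q},\dots,x_i^{q})S$ for all $q=p^{e}$. The heart of the proof is to pass from here to membership of $a$ in $(x_1,\dots,x_i)S'$ for a module-finite $\N$-graded extension $S'$. This is the graded refinement of the statement (due to Hochster-Huneke, and to K.~Smith for parameter ideals) that tight-closure relations of parameter ideals are captured inside the plus closure $R^{+}$: one extracts Frobenius ($q$-th) roots of the homogeneous equations above, working inside larger and larger module-finite graded extensions and invoking the Noetherianity of $S$ to stabilize, while keeping every auxiliary element homogeneous of integral degree so that it remains inside $R^{+\mathrm{gr}}$ and not merely inside $R^{+}$. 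Characteristic $p$ is used decisively both in colon-capturing and in the extraction of $q$-th roots; this is also why, in the surrounding results, the characteristic-zero case must be obtained separately by reduction to characteristic $p$.

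The chief obstacle is exactly this last step. One would like to bypass it by invoking the Hochster-Huneke theorem that $R^{+}$ is a balanced big Cohen-Macaulay algebra over $R_{\MM}$ and then ``intersecting with $R^{+\mathrm{gr}}$'', but this fails: a relation $a=\sum_{j\le i}d_j x_j$ with $d_j\in R^{+}$ cannot in general be replaced by one with homogeneous $d_j$ --- indeed $R^{+}$ is not even contained in $\widetilde{R}$, since a square root of a non-homogeneous element of $R$ lies in $R^{+}$ but is not homogeneous --- so there is no formal route from a solution in $R^{+}$ to one in $R^{+\mathrm{gr}}$. The homogeneity must be built into the extensions from the outset, and doing so is precisely the graded big Cohen-Macaulay construction of \cite{HH2}; it is where essentially all the work lies, the reductions above and the appeal to colon-capturing being routine.
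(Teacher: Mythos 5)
The paper gives no proof of this statement; it cites it directly to \cite[Theorem 5.15]{HH2}, and your proposal is a faithful outline of that argument: realize $R^{+\mathrm{gr}}$ as a directed union of module-finite $\N$-graded extension domains, reduce killing a bad relation $a\,x_{i+1}\in(x_1,\dots,x_i)S$ to a single such extension, invoke colon-capturing for tight closure to get $a\in(x_1,\dots,x_i)^*$, and then use the graded refinement of the Hochster--Huneke/Smith capture of parameter-ideal tight closure by plus closure, keeping all auxiliary elements homogeneous of integer degree. Your explanation of why the ungraded theorem that $R^+$ is a balanced big Cohen--Macaulay algebra cannot simply be intersected with $R^{+\mathrm{gr}}$ is also correct and is precisely the reason the graded construction of \cite{HH2} is needed.
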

}
\end{disc}

Again let $R$ be a Noetherian $\N$-graded domain over a field $R_0=k$ of characteristic $p>0$, and let $x_1, \dots, x_d$ be homogeneous elements of $R$. 
Set $A = k[X_1,...,X_d]$.
Consider the natural map $f:A\rightarrow R$ taking $X_i$ to $x_i$. 
Let $I$ be an ideal of $R$ generated by monomials in $x_1,...,x_d$, and let $L\subseteq A$ be the corresponding ideal of monomials in $A$. 
Let $T_{\bullet}$ denote the Taylor resolution of $A/L$.
(see \cite{Ei}). 

The main fact we need is the following:

\begin{lem}\label{flat} In the situation above, if 
$x_1,...,x_d$ are a homogeneous system of  parameters for $R$, and $S = R^{+\mathrm{gr}}$, then $T_{\bullet}\otimes_AS$ is exact. 
In fact, $S$ is flat over $A$.
\end{lem}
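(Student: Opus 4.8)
The plan is to deduce flatness of $S = R^{+\mathrm{gr}}$ over $A = k[X_1,\dots,X_d]$ directly from Theorem~\ref{rplus}, and then observe that exactness of $T_\bullet \otimes_A S$ is an immediate consequence, since the Taylor resolution $T_\bullet$ is a resolution of $A/L$ by free $A$-modules and tensoring an exact complex of frees with a flat module preserves exactness. So the real content is the second sentence: $S$ is flat over $A$. Since $A$ is a polynomial ring over a field and $x_1,\dots,x_d$ is a homogeneous system of parameters for the $d$-dimensional ring $R$, the map $f:A\to R$ makes $R$ module-finite-ish in the sense that $R$ (hence $S$) has Krull dimension $d$ over the $d$-dimensional regular ring $A$, with $X_1,\dots,X_d \mapsto x_1,\dots,x_d$ generating an $\MM$-primary ideal upstairs. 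The key point I would use is the local criterion for flatness over a regular ring: a module $M$ over the regular ring $A$ is flat if and only if $X_1,\dots,X_d$ is a regular sequence on $M$ (equivalently $\Tor_i^A(A/(X_1,\dots,X_d), M) = 0$ for $i>0$), provided $M$ is suitably graded/finitely generated in the relevant sense — here I would work with the $\N$-grading and note that $A$ is $\N$-graded, $S$ is $\N$-graded, and $f$ is degree-preserving after rescaling, so the graded local criterion for flatness (every finitely generated graded piece behaves well, or: flatness can be checked at the homogeneous maximal ideal) applies. By Theorem~\ref{rplus}, $x_1,\dots,x_d$ is a regular sequence on $S = R^{+\mathrm{gr}}$; pulling back along $f$, this says exactly that $X_1,\dots,X_d$ is a regular sequence on $S$ viewed as an $A$-module. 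Hence $\Tor_i^A(k, S) = 0$ for all $i>0$, where $k = A/(X_1,\dots,X_d)$, and the graded local flatness criterion yields that $S$ is flat (indeed faithfully flat) over $A$.

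With flatness in hand, exactness of $T_\bullet \otimes_A S$ is formal: $T_\bullet \to A/L \to 0$ is exact, each $T_i$ is free over $A$, so $\cdots \to T_1 \to T_0 \to A/L \to 0$ remains exact after applying the exact functor $-\otimes_A S$, giving a resolution of $(A/L)\otimes_A S = S/LS$ by the complex $T_\bullet \otimes_A S$; in particular $T_\bullet \otimes_A S$ is exact in positive homological degrees, which is the assertion. (If one wants $T_\bullet\otimes_A S$ to compute the relevant module and not just be acyclic, the identification $LS = IS = IR^{+\mathrm{gr}}$ via $f$ is what connects it back to the ideal $I$ of interest, but the bare statement of the lemma only needs exactness.)

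The main obstacle is being careful about which flatness criterion one is entitled to invoke: the statement ``regular sequence on $M$ $\Rightarrow$ $M$ flat'' over a regular ring is false for arbitrary modules $M$ (it needs a finiteness or grading hypothesis, e.g.\ $M$ finitely generated, or $M$ graded with $A$ $\N$-graded and Noetherian acting on a graded module bounded below). Since $S = R^{+\mathrm{gr}}$ is emphatically \emph{not} Noetherian and not module-finite over $A$, I would lean on the $\N$-graded structure: $A$ is $\N$-graded with $A_0 = k$ a field, $S$ is $\N$-graded and bounded below, and for $\N$-graded modules over such a ring, flatness over $A$ is equivalent to $\Tor_1^A(k,S) = 0$ (equivalently, to $X_1,\dots,X_d$ being a regular sequence on $S$), by the graded version of the local criterion of flatness (see, e.g., the graded Nakayama / graded local criterion). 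Thus the one genuinely delicate verification is that the hypotheses of the graded local flatness criterion are met — finiteness of $A$ and the bounded-below $\N$-grading on $S$ — after which Theorem~\ref{rplus} does all the work.
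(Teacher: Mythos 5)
Your proposal is correct and follows essentially the same route as the paper, which for the flatness assertion simply cites \cite[Discussion 6.7]{HH}, \cite[Theorem 9.1]{Hu}, and \cite[proof of 2.3]{Ho}; the argument behind those citations is precisely the graded local criterion of flatness that you spell out. Your identification of the one delicate point — that ``regular sequence implies flat'' over a regular ring needs a finiteness-type hypothesis, supplied here by $S=R^{+\mathrm{gr}}$ being $\N$-graded and bounded below over the Noetherian $\N$-graded ring $A$ with $A_0 = k$, so that graded Nakayama applies — is exactly the right thing to flag, and the reduction to $\Tor_1^A(k,S)=0$ via the Koszul complex on $X_1,\dots,X_d$ (acyclic because $x_1,\dots,x_d$ is a regular sequence on $S$ by Theorem~\ref{rplus}) is correct.
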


\begin{proof} Since $T_{\bullet}$ is a free $A$-resolution of $A/L$, where $L$ is generated by monomials, the first
statement follows from the second statement. The flatness of $S$ over $A$ is proved as in the discussion in
\cite[6.7]{HH}. See also \cite[Theorem 9.1]{Hu} and  \cite[proof of 2.3]{Ho}.
\end{proof}

\begin{disc}\label{colon}
{\rm One of the consequences of $S$ being flat over $A$ is that we can compute the
quotient of monomial ideals in the $x_i$ in $S$ as if the monomials belong to $A$. Specifically,
let $I$ and $I'$ be two ideals in $R$ generated by monomials in $x_1,...,x_d$, which are a
homogeneous system of parameters for $R$. Let $L$ and $L'$ be the corresponding ideals in $A$
generated by the same monomials in $X_1,...,X_d$. Then $IS:_SI' = (L:_AL')S$. This follows
at once from the flatness of $S$ over $A$. Note that we \it cannot \rm replace $S$ by $R$ in
this equality unless $R$ is Cohen-Macaulay.}
\end{disc}

We can now prove:

\begin{thm}\label{mainthmcharp}
Let $R=\bigoplus_{n \ge 0} R_n$ be a $d$-dimensional Noetherian graded
ring with $R_0$ a field of characteristic $p>0$. 
Suppose that $\a$ and $J$ are ideals each generated by a full homogeneous system of parameters for $R$, and  put $a$ to be the least degree of the generators of $\a$. 
Let $c \in R^{\circ}$ be a homogeneous element of degree $n$ such that $c(R/P_i) \not\subseteq (JR/P_i)^{+\mathrm{gr}}$ for all $i$, where the $P_i$ range over the minimal primes of $R$ with $R/P_i$ having the same dimension as $R$.  
If $N \ge 0$ is an integer such that $c\a^{N+1}(R/P_i) \subseteq (JR/P_i)^{+\mathrm{gr}}$ for all $i$, then 
$$e(\a) \ge \left(\frac{ad}{aN+ad+n}\right)^d e(J).$$
In particular, if $\a^{N+1}(R/P_i) \subseteq (JR/P_i)^{+\mathrm{gr}}$ for all $i$, then 
$$e(\a) \ge \left(\frac{d}{N+d}\right)^d e(J).$$
\end{thm}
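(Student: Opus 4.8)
The plan is to reduce everything to a monomial computation in the polynomial ring $A = k[X_1,\dots,X_d]$, using the flatness of $S = R^{+\mathrm{gr}}$ over $A$ established in Lemma~\ref{flat}. First I would reduce to the domain case: replacing $R$ by $R/P_i$ for a minimal prime $P_i$ of maximal dimension, the hypotheses $c\notin (JR/P_i)^{+\mathrm{gr}}$ and $c\a^{N+1}\subseteq (JR/P_i)^{+\mathrm{gr}}$ are exactly what we are given, the images of $\a$ and $J$ are still generated by homogeneous systems of parameters of the same degrees, and by the additivity formula (\cite[Theorem 11.2.4]{SH}, as in Remark~\ref{reduce}) the inequality for all such $R/P_i$ implies the inequality for $R$ — provided the constant $\bigl(\tfrac{ad}{aN+ad+n}\bigr)^d$ depends only on $N$, $a$, $n$, $d$ and not on the ring, which it does. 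So assume $R$ is an $\N$-graded domain over $k$, and write $x_1,\dots,x_d$ for the homogeneous s.o.p.\ generating $\a$, with $\deg x_i = a_i$ and $a = a_1 = \min a_i$, and $y_1,\dots,y_d$ for the one generating $J$, with $\deg y_j = b_j$.

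The key device: by Theorem~\ref{rplus}, both $x_1,\dots,x_d$ and $y_1,\dots,y_d$ are regular sequences in $S$, so $S$ is a free module over each of $k[x_1,\dots,x_d]$ and $k[y_1,\dots,y_d]$, and by Lemma~\ref{flat}/Discussion~\ref{colon} colon operations among monomial ideals in the $x_i$ (or the $y_j$) can be computed in the polynomial ring. Now I would estimate $e(\a)$ and $e(J)$ via Hilbert series: by Proposition~\ref{mult}, $e(\a) = a_1\cdots a_d\cdot\ell$ and $e(J) = b_1\cdots b_d\cdot\ell$ where $\ell = \lim_{t\to 1}(1-t)^d P(R,t)$, so $e(\a)/e(J) = (a_1\cdots a_d)/(b_1\cdots b_d)$. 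Thus the inequality to be proven becomes
$$\frac{a_1\cdots a_d}{b_1\cdots b_d} \ge \left(\frac{ad}{aN+ad+n}\right)^d,$$
a purely numerical inequality among the degrees, once we extract from the containment $c\a^{N+1}S \subseteq JS$ (note $\a^{N+1}(R/P_i)\subseteq (JR/P_i)^{+\mathrm{gr}}$ means $\a^{N+1}S\subseteq JS$ after passing to the domain, and similarly for the $c$-version) the arithmetic constraint it imposes on the $a_i$ versus the $b_j$. The mechanism: $c$ has degree $n$, and $c\cdot x_1^{e_1}\cdots x_d^{e_d}\in JS = (y_1,\dots,y_d)S$ for every monomial of total degree $N+1$ in the $x_i$; feeding in the monomial $x_1^{N+1}$ of smallest degree $a(N+1)$, the element $c\,x_1^{N+1}$ of degree $n + a(N+1)$ lies in $(y_1,\dots,y_d)S$, which since the $y_j$ form a regular sequence in $S$ forces a lower bound relating $n + a(N+1)$ to $\sum b_j$ via the top-degree of the socle of $S/(y_1,\dots,y_d)S$ — more precisely one compares with the fact that $S/(y_1,\dots,y_d)S$ has its Hilbert series supported in degrees whose structure is controlled by $\sum(b_j - \text{(bottom degrees)})$. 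The cleanest route is to set this up symmetrically: reduce to comparing the two monomial ideals $(X_1^{a_1},\dots,X_d^{a_d})^{N+1}$-type data against $(X_1^{b_1},\dots)$ in $A$, and invoke the AM–GM inequality $\bigl(\prod \tfrac{b_j}{a_j}\bigr)^{1/d}\le \tfrac1d\sum\tfrac{b_j}{a_j}$ together with the containment-derived bound $\sum b_j \le aN + \sum a_j + n \le aN + ad + n$ (using $a\le a_j$ is the wrong direction, so instead one needs $\sum a_j$ and must track the finer inequality obtaining $\sum(b_j/a_j)\le (\,aN+ad+n\,)/a \cdot$ something). This AM–GM step is also where the equality clause comes from: equality in AM–GM forces all $b_j/a_j$ equal, i.e.\ $(a_1,\dots,a_d)$ proportional to $(b_1,\dots,b_d)$; I would defer the detailed equality analysis to a later remark as the corollary proof does.

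The main obstacle I anticipate is extracting the correct degree inequality from $c\a^{N+1}S\subseteq JS$ in a way that is tight enough to yield the stated bound with the precise constant $ad/(aN+ad+n)$, rather than something weaker. The subtlety is that one must use the regularity of the $y_j$ in $S$ (Theorem~\ref{rplus}) to convert an ideal membership into an a-priori degree bound on a nonzero element of a complete-intersection-type quotient — concretely, that a nonzero homogeneous element of $S$ in degree $> n_0 := \sum(b_j) - (\text{minimal degree in }S)$ need not vanish, so one cannot naively bound degrees, and instead the argument must proceed through the Hilbert-series identity $\sum_i(-1)^i P(\Tor_i^{k[y]}(S,k),t)=\prod(1-t^{b_j})P(S,t)$ to pin down exactly which degrees survive in $S/JS$. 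Getting the bookkeeping of bottom degrees (the $n$ coming from $c$, the $a(N+1)$ from the smallest $(N+1)$-st power) to combine into $aN+ad+n$ — and not, say, $aN + \sum a_j + n$ in a place where that is worse — is the delicate part; the symmetry between $\a$ and $J$ in Proposition~\ref{mult} is what makes it come out clean, and I would lean on that symmetry heavily.
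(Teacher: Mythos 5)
Your framework (reduction to the domain case, passage to $S=R^{+\mathrm{gr}}$, reduction via Proposition~\ref{mult} to the numerical inequality among the degrees $a_i,b_j$, and closing with AM--GM) matches the paper's, and you correctly identify where the equality clause will come from. But the central mechanism is missing, and the replacement you sketch would not work.

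The missing idea is the inductive definition of integers $t_1,\dots,t_d$: $t_1$ is the least $t$ with $cx_1^t\in JS$, and $t_i$ is the least $t$ with $cx_1^{t_1-1}\cdots x_{i-1}^{t_{i-1}-1}x_i^t\in JS$. This produces the monomial ideals $I_i=(x_1^{t_1},\,x_1^{t_1-1}x_2^{t_2},\,\dots,\,x_1^{t_1-1}\cdots x_{i-1}^{t_{i-1}-1}x_i^{t_i})$, and the $d$ inequalities $t_1a_1+\dots+t_ia_i+n\ge b_1+\dots+b_i$ ($1\le i\le d$) are extracted by comparing the Taylor resolution of $I_iA$ in $A=k[X_1,\dots,X_d]$, base-changed to $S$ (exact by Lemma~\ref{flat}), with the Koszul complex of $f_1,\dots,f_d$ over $S$ (exact by Theorem~\ref{rplus}); the multiplication-by-$c$ comparison map in homological degree $i$ is a single map $S(-\sum_{j\le i} t_ja_j)\to\bigoplus S(-b_{v_1}-\dots-b_{v_i})$ because the $i$th Taylor module for this particular staircase ideal has rank one, and nonvanishing of that map is proved via $\Tor_i^S(-,S/(x_1,\dots,x_i))$ and the explicit socle element $x_1^{t_1-1}\cdots x_i^{t_i-1}$. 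One then feeds $\alpha_i=t_i$, $\beta_i=b_i/a_i$, $\gamma_i=a_i/a_1$, $\omega=n/a_1$ into a weighted summation lemma to get $\sum t_i+n/a_1\ge\sum b_i/a_i$, and the definition of the $t_i$ together with $c\a^{N+1}\subseteq JS$ gives $\sum t_i\le N+d$; AM--GM then produces exactly $\bigl(N+d+\tfrac{n}{a_1}\bigr)^d a_1\cdots a_d\ge d^d b_1\cdots b_d$.

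Your proposed substitute --- extracting degree bounds from $cx_1^{N+1}\in JS$ via the Hilbert series of $S/(y_1,\dots,y_d)S$ or the "top degree of the socle" --- does not go through. The ring $S=R^{+\mathrm{gr}}$ is not Noetherian and $S/JS$ is not a finite-dimensional $k$-vector space, so there is no socle-degree bound and no usable Hilbert-series identity of the kind you write; containment of a single nonzero homogeneous element in $(y_1,\dots,y_d)S$ imposes no a~priori upper bound on its degree. Moreover, a single containment yields a single inequality, whereas the AM--GM step requires $d$ inequalities (one per $i$), which is precisely what the inductive $t_i$ and the rank-one Taylor comparison supply. You flag this difficulty honestly ("the main obstacle I anticipate…"), and it is indeed the crux: without the $t_i$ construction and the Taylor/Koszul comparison the proof cannot be completed along the lines you describe.
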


\begin{proof}
By Remark~\ref{reduce} we may assume without loss of generality that
$R$ is a domain. Throughout the proof, we set $S = R^{+\mathrm{gr}}$.

We chiefly follow the same proof that was given in \cite[Theorem 5.8]{HMTW}, making
suitable modifications to take advantage of the fact that $S$ is a big Cohen-Macaulay
algebra for $R$; however, we have a simplification at the end of the proof.

Suppose that $\a$ is generated by a full homogeneous system of
parameters $x_1, \dots, x_d$ of degrees $a_1 \le \dots \le a_d$, and
that $J$ is generated by another homogeneous system of parameters
$f_1, \dots, f_d$ of degrees $b_1\leq\dots\leq b_d$. 

To prove the theorem it suffices to show that\footnote{In the proof
of \cite[Theorem 5.8]{HMTW} it is claimed that in the Cohen-Macaulay
case the multiplicity of $x_1, \dots, x_d$ (resp. $f_1, \dots, f_d$)
is $a_1 \dots a_d$ (resp. $b_1 \dots b_d$). This is not correct;
however, in that paper we used only  that $\frac {e(x_1,...,x_d)}{a_1 \dots a_d} = \frac {e(f_1,...,f_d)}{b_1 \dots b_d}$.
That this is true is easy to see in the Cohen-Macaulay case, and follows more generally
by Proposition~\ref{mult}.} 
$$(N+d+\frac{n}{a_1})^d a_1 \dots a_d \ge d^d b_1 \dots b_d.$$
That this inequality is enough to prove the theorem follows from Proposition~\ref{mult} since $\frac{e(\a)} {a_1\cdots a_d} =  
\frac{e(J)} {b_1\cdots b_d}$ (this common ratio is 
$\lim_{t\to 1} (1-t)^d P(R,t)$ where $P(R,t)$ is the Poincar\'e series of $R$).

Define positive integers $t_1, \dots, t_{d}$ inductively as follows:
$t_1$ is the smallest integer $t$ such that $cx_1^t \in J^{+\mathrm{gr}} (=  JS\cap R)$. If $2\leq
i\leq d$, then $t_i$ is the smallest integer $t$ such that
$cx_1^{t_1-1} \cdots x_{i-1}^{t_{i-1}-1}x_i^t \in J^{+\mathrm{gr}}$. 

We  show the following inequality for every $i=1, \dots, d$:
\begin{equation}\label{eq_homogeneous2}
t_1 a_1+\cdots+t_{i} a_i +n \geq b_1+\cdots+b_i.
\end{equation}
Let $I_i$ be the ideal of $R$ generated by $x_1^{t_1}, x_1^{t_1-1}x_2^{t_2}, \ldots, x_1^{t_1-1} \cdots x_{i-1}^{t_{i-1}-1}x_i^{t_i}$ for each $1\le i\le d$. 
We let $L_i$ be the corresponding ideal of monomials in $A$,
where $A = k[X_1,...,X_d]$ is as in the discussion above.
Note that the definition of the integers $t_j$ implies that $cI_iS \subseteq JS$. 
We use the Taylor resolution $T_{\bullet}$ for $A/L_i$. 
After tensoring with $S$, this complex is exact by Lemma~\ref{flat}. 
The multiplication map  by $c$ from $S/I_iS$ into $S/JS$ induces a comparison map of degree $n$ between $T_{\bullet}\otimes_AS$ and the Koszul complex of the generators $f_1,...,f_d$ of $J$. 
This Koszul complex is acyclic since the $f_i$ form a regular sequence in $S$ by Theorem~\ref{rplus}. 
Note that the $i^{\rm th}$ step in the Taylor complex $T_{\bullet}$ for the monomials
$X_1^{t_1},X_1^{t_1-1}X_2^{t_2},\ldots,X_1^{t_1-1}\cdots X_{i-1}^{t_{i-1}-1}X_i^{t_i}$ in  $A$ is a free module of rank one, with a generator corresponding to the monomial
$${\rm lcm}(X_1^{t_1},X_1^{t_1-1}X_2^{t_2},\ldots,X_1^{t_1-1}\cdots
X_{i-1}^{t_{i-1}}X_i^{t_i})=X_1^{t_1}\cdots
X_{i-1}^{t_{i-1}}X_i^{t_i}$$ (see \cite[Exercise 17.11]{Ei}).
It follows that the map of degree $n$ between the $i^{\rm th}$ steps in the resolutions of $S/I_iS$ and $S/JS$ is of the form
$$S(-t_1 a_1-\cdots-t_{i} a_i) \to \bigoplus_{1 \le v_1 < \dots < v_i \le d}S(-b_{v_1}-\dots-b_{v_i}).$$
In particular, unless this map is zero, we have
$$t_1a_1+\cdots+t_{i} a_i+n \geq \min_{1 \le v_1 < \dots < v_i \le
d}\{b_{v_1}+\cdots+b_{v_i}\}=b_1+\cdots+b_i.$$ 
We now show that this map cannot be zero. If it is zero, then also the induced map
\begin{equation}\label{eq_homogeneous1}
\Tor_i^S(S/I_iS, S/\b_i) \to \Tor_i^S(S/JS, S/\b_i)
\end{equation}
is zero, where $\b_i$ is the ideal in $S$ generated by $x_1, \dots, x_i$. On the other
hand, using the Koszul resolution  on $x_1, \dots, x_i$ to compute the
above $\Tor$ modules (these elements are a regular  sequence in $S$), we see that the map (\ref{eq_homogeneous1})
can be identified with the multiplication map by $c$ 
$$(I_iS\colon_S \b_i)/I_iS \xrightarrow{\times c} (JS\colon_S \b_i)/JS.$$
Since $x_1^{t_1-1}\cdots x_i^{t_i-1}\in (I_iS\colon \b_i)$, if the map in (2) is zero, 
then it follows that
$cx_1^{t_1-1} \cdots x_i^{t_i-1}$
lies in $J^{+\mathrm{gr}}$, a contradiction. This proves (\ref{eq_homogeneous2}).

To finish the proof, we will use the following claim, which is a slight modification of the claim in the proof of \cite[Theorem 5.8]{HMTW}:
\begin{cl}
Let $\alpha_i,\beta_i,\gamma_i$ be real numbers for $1\leq i\leq d$ and $\omega$ be a real number. 
 If $1=\gamma_1\leq\gamma_2\leq\ldots\leq\gamma_d$ and if
$\gamma_1\alpha_1+\dots+\gamma_i\alpha_i+\omega \ge \gamma_1\beta_1+\dots+\gamma_i\beta_i$ for all $i=1, \dots, d$, then $\alpha_1+\dots+\alpha_d+\omega \ge \beta_1+\dots+\beta_d$.
\end{cl}
\begin{proof}[Proof of Claim]
The proof is essentially the same as that of the claim in the proof of \cite[Theorem 5.8]{HMTW}. 
\end{proof}

As in \cite{HMTW},\footnote{The end of the proof is simpler here than in \cite{HMTW},
due to the fact that we are able to remove an argument using linkage. This improvement
comes from a suggestion by {\bf Hailong Dao}, who we thank.} we now
set $\alpha_i=t_i$, $\beta_i=b_i/a_i$ and $\gamma_i=a_i/a_1$ for $1 \le i \le d$. 
We put $\omega=n/a_1$. 
Since $a_1\leq\dots\leq
a_d$, we deduce $1=\gamma_1\leq\dots\leq\gamma_d$. Moreover, using
(\ref{eq_homogeneous2}), we obtain that
$\gamma_1\alpha_1+\dots+\gamma_i\alpha_i+\omega \ge
\gamma_1\beta_1+\dots+\gamma_i\beta_i$ for $1 \le i \le d$. Using
the above claim, we conclude that
$$t_1+\dots+t_d+\frac{n}{a_1}=\alpha_1+\dots+\alpha_d+\omega \ge \beta_1+\dots +\beta_d=\frac{b_1}{a_1}+\dots+\frac{b_d}{a_d}.$$
The inductive definition of the $t_i$ shows that the monomial
$cx_1^{t_1-1}\cdots x_d^{t_d-1}\notin JS$. 
Since $c\a^{N+1}\subseteq JS$, we
see that $N+d \ge t_1+\ldots + t_d$. Comparing the arithmetic mean
with the geometric mean of the $b_i/a_i$ yields the conclusion that
$$(N+d+\frac{n}{a_1})^d a_1 \dots a_d \ge d^d b_1 \dots b_d,$$
which finishes the proof of Theorem~\ref{mainthmcharp}.
\end{proof}

\begin{rem}\label{equality}
Let the notation be as in Theorem \ref{mainthm}. 
Since we compare the arithmetic mean with the geometric mean of the $\frac{b_i}{a_i}$ in the proof of Theorem \ref{mainthmcharp}, if equality holds in the inequality of Theorem \ref{mainthm}, then $(a_1, \dots, a_d)$ and $(b_1, \dots, b_d)$ have to be proportional, that is, $\frac{b_1}{a_1}= \dots =\frac{b_d}{a_d}$. 
We also remark that if equality holds in the inequality of Corollary \ref{maincor}, then $(a_1, \dots, a_d)$ and $(b_1, \dots, b_d)$ also have to be proportional. 
Suppose to the contrary that $(a_1, \dots, a_d)$ and $(b_1, \dots, b_d)$ are not proportional. In this case, the rational number 
$$\epsilon:=\frac{({\frac{b_1}{a_1}+\dots+\frac{b_d}{a_d}})^da_1 \cdots a_d}{d^d b_1\cdots b_d}-1$$ 
is strictly positive. For all $q=p^e$, one has  
$$\left({\frac{b_1q}{a_1}+\dots+\frac{b_dq}{a_d}}\right)^da_1 \cdots a_d=(1+\epsilon)d^d(b_1q)\cdots (b_dq).$$
By the argument as in the proof of Theorem \ref{mainthmcharp}, this implies that 
$$e(\a) \geq (1+\epsilon) \left(\frac{d}{\nu^J_{\a}(q)+d}\right)^d e(J^{[q]})=(1+\epsilon) \left(\frac{qd}{\nu^J_{\a}(q)+d}\right)^d e(J)$$ for all $q=p^e$.
Since $\epsilon$ is independent of $q$, we conclude that 
$$e(\a) \geq (1+\epsilon) \left(\frac{d}{\pt_-^J(\a)}\right)^d e(J) \gneq \left(\frac{d}{\pt_-^J(\a)}\right)^d e(J) .$$
\end{rem}
\begin{rem} If equality holds in the inequality of Corollary 
\ref{maincor}, we think $\a$ and $J$ are \lq\lq equivalent" in the sense of 
the following conjecture, which is true if $d=2$ or, more generally, 
if the set $\{a_1,\ldots ,a_d\}$ consists of at most $2$ elements.  
\end{rem}

\begin{conj} 
Let $R$, $\a$ and $J$ be as in Corollary \ref{maincor}.
Suppose that 
$$e(\a) = \left(\frac{d}{\pt^J_{-}(\a)}\right)^d e(J).$$
If we put  $b_1/a_1=\ldots = b_d/a_d =  t/s$, where $s$ and $t$ are positive integers, then $\a^t$ and $J^s$ have the same integral closure. 
\end{conj}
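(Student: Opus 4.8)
The plan is to deduce the conjecture from the equality case of Rees's multiplicity theorem. First note that the hypothesis says more than that the degrees are proportional: by the equality clause of Corollary~\ref{maincor} (see Remark~\ref{equality}) it forces $b_i/a_i$ to be independent of $i$, say $b_i/a_i=t/s$ with $\gcd(s,t)=1$, so that $s\mid a_i$ for all $i$; and then Proposition~\ref{mult} gives $e(\a)/(a_1\cdots a_d)=e(J)/(b_1\cdots b_d)$, hence $e(\a)/e(J)=(s/t)^d$ and
$$e(\a^t)=t^d\,e(\a)=s^d\,e(J)=e(J^s).$$
After the usual reduction to the case where $R$ is a Noetherian $\N$-graded domain over $k$ (as in Remark~\ref{reduce}: integral closure is detected modulo the minimal primes, and the localization at the homogeneous maximal ideal of a graded domain over a field is formally equidimensional), Rees's theorem (see \cite{SH}) reduces the conjecture to proving \emph{one} of the two integral containments $\a^t\subseteq\overline{J^s}$ or $J^s\subseteq\overline{\a^t}$: combined with the equality $e(\a^t)=e(J^s)$ above, either one forces $\overline{\a^t}=\overline{J^s}$ by a second application of Rees's theorem.

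I would establish such a containment valuatively. Translating to Rees valuations, $\a^t\subseteq\overline{J^s}$ is equivalent to $t\,w(\a)\ge s\,w(J)$, i.e. $w(J)/w(\a)\le t/s$, for each of the finitely many Rees valuations $w$ of $J$ (here $w(\a)=\min_i w(x_i)$, $w(J)=\min_i w(f_i)$), while dually $J^s\subseteq\overline{\a^t}$ says $v(J)/v(\a)\ge t/s$ for all Rees valuations $v$ of $\a$. The tool for producing such inequalities is the proof of Theorem~\ref{mainthmcharp} itself, run with $J$ replaced by $J^{[q]}$, $c=1$, $n=0$ and $N=\nu^J_\a(q)$ (so the hypothesis $\a^{N+1}\subseteq J^{[q]}$ holds by definition of $\nu^J_\a(q)$): for each $q=p^e$ it produces positive integers $t_1,\dots,t_d$ (depending on $q$) with $x_1^{t_1-1}\cdots x_{i-1}^{t_{i-1}-1}x_i^{t_i}\in J^{[q]}S$ for every $i$, where $S=R^{+\mathrm{gr}}$, subject to $\sum_{j\le i}a_jt_j\ge q\sum_{j\le i}b_j$ and $\sum_j t_j\le \nu^J_\a(q)+d$; in particular $\nu^J_\a(q)\ge\lceil dqt/s\rceil-d$. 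Since $\pt^J_-(\a)=\liminf_q\nu^J_\a(q)/q=dt/s$, along a suitable sequence of $q$'s all of these inequalities become asymptotically sharp, which forces the arithmetic--geometric mean step and the Claim in the proof of Theorem~\ref{mainthmcharp} to be (asymptotically) equalities, and so constrains the $t_i$. In the favourable situation this constrains the $t_i$ so tightly that, feeding the memberships $x_1^{t_1-1}\cdots x_{i-1}^{t_{i-1}-1}x_i^{t_i}\in J^{[q]}S$ into a Rees valuation $w$ of $J$ and using that $f_1,\dots,f_d$ and $x_1,\dots,x_d$ are regular sequences in $S$ (Theorem~\ref{rplus}), so that colons of the relevant monomial ideals in the $x_i$ and $f_i$ are computed in $S$ as in $A=k[X_1,\dots,X_d]$ (Lemma~\ref{flat}, Discussion~\ref{colon}), one recovers in the limit $w(x_i)\ge (s/t)\,w(J)$ for each $i$, i.e. $w(J)/w(\a)\le t/s$.

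The favourable case is that in which $\{a_1,\dots,a_d\}$ has at most two elements, and I would treat it first. There the equality case of the Claim forces the partial sum $\sum_{j\le i}a_jt_j$ to equal $q\sum_{j\le i}b_j$ at each index $i$ where the degree strictly increases (and at $i=d$); combining these block-sum identities with the memberships $x_1^{t_1-1}\cdots x_i^{t_i}\in J^{[q]}S$ and with the freedom to reorder the $x_j$ within a block of equal degrees is enough to drive the valuative inequalities $w(J)/w(\a)\le t/s$. The main obstacle to the general case is that this rigidity degrades once three or more distinct degrees occur: the equality case of the Claim then pins down only these block sums, leaving the individual $t_i$ — and with them the comparison maps between the Taylor resolution of $S/I_iS$ and the Koszul complex on $f_1,\dots,f_d$ over $S$ — underdetermined; and the estimate available with no rigidity at all, namely $\a^{\nu^J_\a(q)+1}\subseteq J^{[q]}\subseteq J^q$, yields only $w(J)/w(\a)\le dt/s$, off by the factor $d$. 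Closing this gap uniformly seems to require either a sharpening of the invariant $\pt^J_-(\a)$ that records the individual steps of the Taylor resolution, or a genuinely different line of attack — for instance the equality case of the Minkowski inequality for multiplicities, or a direct analysis of the Rees valuations of $\a$ and of $J$ — and this is the step I expect to be hardest.
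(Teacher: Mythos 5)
There is nothing to compare your argument against: the statement you are addressing is presented in the paper as an open \emph{conjecture}. The authors give no proof; they only remark, just before stating it, that it is true when $d=2$ or, more generally, when the set $\{a_1,\dots,a_d\}$ has at most two elements (and even for that case no argument is printed). So the question is whether your proposal actually proves it, and it does not. Your preliminary reductions are sound and are the natural opening moves: equality in Corollary \ref{maincor} forces proportionality by Remark \ref{equality}; Proposition \ref{mult} then gives $e(\a)/e(J)=(s/t)^d$, hence $e(\a^t)=e(J^s)$; and Rees's multiplicity theorem (in a formally equidimensional local ring) reduces everything to establishing a single integral containment $\a^t\subseteq\overline{J^s}$ or $J^s\subseteq\overline{\a^t}$. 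But that containment is exactly the content of the conjecture, and it is never established. Your valuative scheme is only a plan: the integers $t_i$ depend on $q$, the asymptotic sharpness of the inequality (\ref{eq_homogeneous2}) and of the AM--GM step holds only along a subsequence and up to $o(q)$ error terms, and you give no limiting argument converting these approximate block-sum identities into exact inequalities $w(J)/w(\a)\le t/s$ for the Rees valuations. Even in the ``favourable'' two-degree case the decisive step is asserted (``is enough to drive the valuative inequalities'') rather than proved, and you explicitly concede that the general case, with three or more distinct degrees, is beyond the method. That concession is the gap: what you have is a strategy consistent with the authors' own expectations, not a proof.

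One further point of caution about the setup. The reduction ``as in Remark \ref{reduce}'' is not adequate for this statement: integral closure of ideals is detected modulo \emph{all} minimal primes of $R$, including those $P$ with $\dim R/P<d$, whereas the multiplicity identity $e(\a^t)=e(J^s)$ and the equality hypothesis on $\pt^J_-(\a)$ carry information only about the top-dimensional components. So even granting the valuative inequalities on each $d$-dimensional quotient $R/P_i$, you would still owe an argument (or an additional hypothesis, e.g.\ $R$ equidimensional or a domain) handling the lower-dimensional components before concluding $\overline{\a^t}=\overline{J^s}$ in $R$ itself.
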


\bigskip

\section{Main Theorem in Characteristic Zero}

\medskip

In this section we outline the proof of Theorem~\ref{mainthm} in characteristic zero. 
The reduction to characteristic $p$ is fairly standard.
We prove:

\begin{thm}\label{mainthm0}
Let $R=\bigoplus_{n \ge 0} R_n$ be a $d$-dimensional Noetherian graded
ring with $R_0 = k$ a field of characteristic zero.
Suppose that $\a$ and $J$ are ideals each generated by a full homogeneous system of
parameters for $R$.  
If $\a^{N+1}\subseteq J$ for some integer $N \ge 0$, then
$$e(\a) \ge \left(\frac{d}{d+N}\right)^d e(J).$$
\end{thm}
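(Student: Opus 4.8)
The plan is to reduce Theorem~\ref{mainthm0} to the positive characteristic case already established in Theorem~\ref{mainthmcharp} (or rather its consequence Theorem~\ref{mainthm} in characteristic $p$) via the standard technique of ``reduction mod $p$.'' First I would observe that all of the data --- the graded ring $R = \bigoplus_{n\ge 0} R_n$, the homogeneous systems of parameters $x_1,\dots,x_d$ generating $\a$ and $f_1,\dots,f_d$ generating $J$, the degrees $a_i$ and $b_i$, and the finitely many polynomial relations witnessing $\a^{N+1}\subseteq J$ (i.e.\ expressions of each degree-$(N+1)$ monomial in the $x_i$ as an $R$-linear combination of the $f_j$) --- involve only finitely many elements of $R$ and finitely many coefficients. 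Hence there is a finitely generated $\Z$-subalgebra $A \subseteq k$ and a finitely generated graded $A$-algebra $R_A$ with $R_A \otimes_A k \cong R$, carrying elements $x_i, f_j \in R_A$ whose images generate $\a$ and $J$, and such that the chosen relations witnessing $\a^{N+1}\subseteq J$ already hold in $R_A$.

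Next I would arrange, after localizing $A$ at a single nonzero element if necessary (which is harmless since $\mathrm{Spec}\,A$ has dense image of closed points with finite residue fields of arbitrarily large characteristic), that the relevant properties are preserved under specialization. Concretely: by generic flatness I can assume $R_A$ is free (hence flat) over $A$, so that the Poincar\'e series --- and therefore $\lim_{t\to 1}(1-t)^d P(R,t)$, which by Proposition~\ref{mult} governs both $e(\a)/(a_1\cdots a_d)$ and $e(J)/(b_1\cdots b_d)$ --- is constant in the fibers; I can also assume each fiber ring $R_\kappa := R_A \otimes_A \kappa$ has dimension $d$ and that $x_1,\dots,x_d$ (resp.\ $f_1,\dots,f_d$) remain a homogeneous system of parameters in $R_\kappa$ (this is an open condition, again by standard constructibility / semicontinuity of fiber dimension of $R_A/(x_1,\dots,x_d)$ over $\mathrm{Spec}\,A$). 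With $A$ so shrunk, for every closed point $\mu \in \mathrm{Spec}\,A$ the residue field $\kappa = A/\mu$ is a finite field of some characteristic $p>0$, the ring $R_\kappa$ is a $d$-dimensional Noetherian graded ring over the field $\kappa$, the images of $x_i, f_j$ are full homogeneous systems of parameters of degrees $a_i, b_i$ generating ideals $\a_\kappa, J_\kappa$, and $\a_\kappa^{N+1} \subseteq J_\kappa$ because the defining relations were built into $R_A$.

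Then I apply Theorem~\ref{mainthm} in characteristic $p$ to $R_\kappa$ to get
\[
e(\a_\kappa) \ge \left(\frac{d}{d+N}\right)^d e(J_\kappa).
\]
Finally I transfer this back: by flatness and Proposition~\ref{mult}, $e(\a_\kappa) = a_1\cdots a_d \cdot \ell$ and $e(J_\kappa) = b_1\cdots b_d\cdot \ell$ with $\ell = \lim_{t\to 1}(1-t)^d P(R_\kappa,t) = \lim_{t\to 1}(1-t)^d P(R,t)$ independent of $\kappa$ and equal to the corresponding limit for $R$ itself; likewise $e(\a) = a_1\cdots a_d\cdot \ell$ and $e(J) = b_1\cdots b_d\cdot\ell$. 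Thus the characteristic-$p$ inequality reads $a_1\cdots a_d\cdot\ell \ge (d/(d+N))^d\, b_1\cdots b_d\cdot\ell$, which is literally the desired inequality $e(\a) \ge (d/(d+N))^d e(J)$ in characteristic zero. (In fact the inequality is then purely an inequality among the integers $a_i, b_i, N, d$, and one could even avoid the reduction and cite Proposition~\ref{mult} plus the numeric inequality $(N+d)^d a_1\cdots a_d \ge d^d b_1\cdots b_d$ extracted from the characteristic-$p$ argument.)

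The main obstacle is the bookkeeping in the descent/spreading-out step: one must make sure that \emph{simultaneously} (i) $R_A$ is $A$-flat with the Poincar\'e series constant in fibers, (ii) the fiber dimension stays equal to $d$, (iii) the two sequences remain homogeneous systems of parameters in every closed fiber, and (iv) the finitely many relations encoding $\a^{N+1}\subseteq J$ persist --- all achievable after inverting one element of $A$, but requiring a careful invocation of generic flatness and constructibility of the locus where fiber dimension jumps. Once this is set up the conclusion is immediate from Theorem~\ref{mainthm} in characteristic $p$ together with Proposition~\ref{mult}; indeed, since the final inequality only involves the degrees $a_i,b_i$ and the integer $N$, there is essentially nothing left to prove after the reduction is in place.
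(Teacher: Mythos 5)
Your proposal is correct and follows essentially the same reduction-to-characteristic-$p$ argument as the paper: spread out the data over a finitely generated $\Z$-subalgebra, use generic flatness and constructibility to preserve the system-of-parameters and containment properties in closed fibers, apply the characteristic-$p$ theorem there, and transfer back via the observation (Proposition~\ref{mult}) that the desired inequality is purely numerical in the degrees $a_i, b_i$. The paper uses exactly this scheme, reducing at the outset to the inequality $a_1\cdots a_d \ge (d/(d+N))^d b_1\cdots b_d$, which is the same simplification you note parenthetically.
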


\begin{proof} Let $\a$ be generated by the homogeneous parameters $x_1,...,x_d$ and
let $J$ be generated by the homogeneous parameters $f_1,...,f_d$. Without loss of
generality we may assume that the degree of $x_i$ is $a_i$ with $a_1\le a_2\ldots \le a_d$, 
and similarly that the degree of $f_i$ is $b_i$ with $b_1\le b_2\le \ldots \le b_d$.
As in the proof of Theorem \ref{mainthmcharp}, 
it suffices to prove that 
$$a_1\cdots a_d  \ge \left(\frac{d}{d+N}\right)^db_1\cdots b_d.$$
We begin by writing $R = k[t_1,...,t_n]\cong  B/I$, where $B = k[T_1,...,T_n]$ is a
homogeneous polynomial ring with each $T_i$ having a positive degree, such that
$I$ is homogeneous and the isomorphism of $R$ with $B/I$ is degree-preserving, taking each $T_i$
to the homogenous algebra generator $t_i$ of $R$. We lift each $x_i$ to a homogeneous polynomial $h_i\in B$
and each $f_i$ to a homogeneous polynomial $g_i\in B$. We also fix homogeneous generators
$F_1,...,F_l$ of $I$. Furthermore, since the maximal homogeneous ideal of $R$ has a power
contained in the ideal $(x_1,...,x_d)$, there are equations in $B$ which express
this. A typical one, e.g., would be of the form
$$T_j^M = \sum_{1\leq i\leq d} p_{ji}h_i + \sum_{1\leq i\leq l} q_{ji}F_i$$
for some fixed large power $M$.

Since the maximal homogeneous ideal of $R$ also has a power contained in the ideal $(f_1, \dots, f_d)$, 
there is another set of equations expressing the fact that the $T_j$ are nilpotent modulo
the ideal $I$ plus the ideal generated by $g_1,...,g_d$. 
There are also equations which express the fact that each monomial of total degree
$N$ in the $x_i$ is in $J$; these are expressed by equations which give that
every monomial $h_1^{m_1}\cdots h_d^{m_d}$ in the $h_i$ of total degree $N = m_1+\ldots + m_d$ is equal to an element in the
ideal $I + (g_1,..,g_d)B$.

Now let $A$ be a finitely generated $\mathbb Z$-subalgebra of $k$ which has the
coefficients of all polynomials in all the equations and defining ideals above. 
We let $B_A = A[T_1,...,T_n]$. By possibly adding more coefficients and generators, we can assume that
$I_A = I\cap B_A$ is generated by $F_1,...,F_l\in B_A$.  We let $x_{iA}$ be the image of
$h_i$ in $R_A = B_A/I_A$, and let $f_{iA}$ be the image of $g_i$ in the same ring. By the lemma of generic flatness, we can invert one element of $A$ to make
$R_A$ free over $A$ (here we replace $A$ by the localization of $A$ at that one element. It is
still finitely generated over $\mathbb Z$). Note that $R = R_A\otimes_Ak$.

Choose any
maximal ideal $\n$ of $A$, and let $\kappa = A/\n$, a finite field. We use $\kappa$ as a subscript to
denote images after tensoring over $A$ with $\kappa$. The dimension $d$ of $R$ is
equal to the dimension of $R_{\kappa}$, and $R_{\kappa}$ is a positively graded 
Noetherian ring over
the field $\kappa$. For the proofs and a discussion of the dimension 
of the fiber $R_{\kappa}$, see \cite[Appendix, Section 2]{Hu} or \cite[2.3]{HH3}.

The images of the $x_{iA}$ and the $f_{iA}$ in $R_{\kappa}$ form homogeneous systems of
parameters generating ideals $\a_{\kappa}$ and $J_{\kappa}$ respectively, and
furthermore $\a_{\kappa}^N\subseteq J_{\kappa}$.  Moreover the degrees of these elements are
the same as the degrees of the corresponding elements in characteristic zero. By the main theorem in characteristic $p$ (Theorem \ref{mainthmcharp}), 
$$a_1\cdots a_d\ge  \left(\frac{d}{d+N}\right)^d b_1\cdots b_d.$$
\end{proof}

\bigskip

\section{A comparison of $F$-thresholds and $F$-jumping numbers}

\medskip

In this section, we compare $F$-thresholds and jumping numbers for the generalized parameter test submodules introduced in \cite{ST}. 
Throughout this section, we use the following notation. 

\begin{notation}
Let $(R, \m)$ be a $d$-dimensional Noetherian excellent reduced ring of equal characteristic satisfying one of the following conditions. 
\begin{enumerate}[{(}a{)}]
\item $R$ is a complete local ring with the maximal ideal $\m$. 
%which is a homomorphic image of a Gorenstein local ring. 
\item $R=\bigoplus_{n \ge 0}R_n$ is a graded ring with $R_0$ a field and $\m=\bigoplus_{n \ge 1}R_n$. 
\end{enumerate} 
Then $R$ admits a (graded) \textit{canonical module} $\omega_R$: 
in the case (a), 
$\omega_R$ is the finitely generated $R$-module $\Hom_R(H^d_{\m}(R), E_R(R/\m))$, 
%$\omega_R$ is a finitely generated $R$-module such that 
%$\Hom_R(\omega_R, E_R(R/\m)) \cong H^d_{\m}(R)$, 
where $E_R(R/\m)$ is the injective hull of the residue field $R/\m$. 
In the case (b), 
$\omega_R$ is the finitely generated graded $R$-module $\underline{\mathrm{Hom}}_{R_0}(H^d_{\m}(R), R_0):=\bigoplus_{n \in \Z}\Hom_{R_0}([H^d_{\m}(R)]_{-n}, R_0)$. 
%$\omega_R$ is a finitely generated graded $R$-module such that 
%$$\underline{\mathrm{Hom}}_{R_0}(\omega_R, R_0):=\bigoplus_{n \in \Z}\Hom_{R_0}([\omega_R]_{-n}, R_0) \cong H^d_{\m}(R).$$

Also, in dealing with graded rings, we assume that all the ideals and systems of parameters considered are homogeneous. 
\end{notation}

First we recall the definition of a generalization of tight closure introduced by Hara and Yoshida \cite{HY}. 
\begin{defn}
Assume that $R$ is a ring of characteristic $p>0$. 
Let $\a$ be an ideal of $R$ such that $\a \cap R^{\circ} \ne \emptyset$ and $t \ge 0$ be a real number. 

\begin{enumerate}[{(}i{)}]
\item (\cite[Definition 6.1]{HY})
For any ideal $I \subseteq R$, the $\a^t$-\textit{tight closure} $I^*$ of $I$ is defined to be the ideal of $R$ consisting of all elements $x \in R$ for which there exists $c \in R^{\circ}$ such that 
$$c\a^{\lceil tq \rceil}x^q \subseteq  I^{[q]}$$
for all large $q=p^e$. 
When $\a=R$, we denote this ideal simply by $I^*$. 

\item (\cite[Definition 6.1]{HY})
The $\a^t$-\textit{tight closure} $0^{*\a^t}_{H^d_{\m}(R)}$ of the zero submodule in $H^d_{\m}(R)$ is defined to be the submodule of $H^d_{\m}(R)$ consisting of all elements $\xi \in H^d_{\m}(R)$ for which there exists $c \in R^{\circ}$ such that 
$$c\a^{\lceil tq \rceil}\xi^q=0$$
in $H^d_{\m}(R)$ for all large $q=p^e$, where $\xi^q:=F^e(\xi) \in H^d_{\m}(R)$ denote the image of $\xi$ via the induced $e$-times iterated Frobenius map $F^e: H^d_{\m}(R) \to H^d_{\m}(R)$. 
When $\a=R$, we denote this submodule simply by $0^*_{H^d_{\m}(R)}$. 

\item (\cite[Definition 6.6]{ST})
An element $c \in R^{\circ}$ is called a \textit{parameter $\a^t$-test element} if, for all ideals $I$ generated by a system of parameters for $R$, one has $c\a^{\lceil tq \rceil}x^q \subseteq I^{[q]}$ for all $q=p^e$ whenever $x \in I^{*\a^t}$. 
When $\a=R$, we call such an element simply as a parameter test element. 

\item (\cite{FW}) 
$R$ is said to be \textit{$F$-rational} if $I^*=I$ for all ideals $I \subseteq R$ generated by a system of parameters for $R$. 
This is equivalent to saying that $R$ is Cohen-Macaulay and $0^*_{H^d_{\m}(R)}=0$. 
\end{enumerate}
\end{defn}

Now we are ready to state the definition of $F$-jumping numbers introduced in \cite{ST}.
 
\begin{defn}\label{parameter $F$-jumping}
Assume that $R$ is a ring of characteristic $p>0$, and let $\a$ be an ideal of $R$ such that $\a \cap R^{\circ} \ne \emptyset$. 

\begin{enumerate}[{(}i{)}]

\item (\cite[Remark 6.4]{ST})
For every real number $t \ge 0$, the \textit{generalized parameter test submodule} $\tau(\omega_R, \a^t)$ associated to the pair $(R,\a^t)$ is defined to be
$$\tau(\omega_R, \a^t)=\Ann_{\omega_R}(0^{*\a^t}_{H^d_{\m}(R)}) \subseteq \omega_R.$$
\item (\cite[Definition 7.9]{ST}) 
For every ideal $J \subseteq R$ such that $\a \subseteq \sqrt{J}$, the \textit{$F$-jumping number} $\mathrm{fjn}^J(\omega_R, \a)$ of $\a \omega_R$ with respect to $J \omega_R$ is defined to be
$$\mathrm{fjn}^J(\omega_R, \a)=\inf\{t \ge 0 \mid \tau(\omega_R, \a^t) \subseteq J \omega_R\}.$$
\end{enumerate}
\end{defn}

\begin{rem}
 (1) (\cite[Lemma 7.10]{ST}) For every real number $t \ge 0$, there exists $\varepsilon>0$ such that 
$\tau(\omega_R, \a^t)=\tau(\omega_R, \a^{t+\varepsilon})$. In particular, 
$$\mathrm{fjn}^J(\omega_R, \a)=\min\{t \ge 0 \mid \tau(\omega_R, \a^t) \subseteq J \omega_R\}.$$
 
 (2) (\cite[Proposition 2.7]{MTW}) In case $R$ is a regular ring, the $F$-jumping number $\mathrm{fjn}^J(\omega_R, \a)$ coincides with the $F$-threshold $\pt^J(\a)$ for all ideals $\a$ and $J$ such that $\a \subseteq \sqrt{J}$.

(3) (\cite[Remark 2.5]{HMTW}) In general, the $F$-jumping number $\mathrm{fjn}^J(\omega_R, \a)$ disagrees with the $F$-threshold $\pt^J(\a)$. 
\end{rem}

If $J$ is generated by a full system of parameters and if the ring is $F$-rational away from $\a$, then the $F$-jumping number $\mathrm{fjn}^J(\omega_R, \a)$ coincides with the $F$-threshold $\pt^J(\a)$. 
\begin{thm}\label{characterization}
Suppose that $R$ is an equidimensional ring of characteristic $p>0$ and $J$ is an ideal generated by a full system of parameters for $R$. 
Assume in addition that 
%$V(\a)$ contains the non-$F$-rational locus of $\Spec R$ $($which is equivalent to saying that
$R_P$ is $F$-rational for all prime ideals $P$ not containing $\a$.
Then 
$$\mathrm{fjn}^J(\omega_R, \a)=\pt_{+}^J(\a).$$
\end{thm}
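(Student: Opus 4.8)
The plan is to prove the two inequalities $\pt_+^J(\a)\le \mathrm{fjn}^J(\omega_R,\a)$ and $\mathrm{fjn}^J(\omega_R,\a)\le \pt_+^J(\a)$ separately, translating both invariants into statements about the local cohomology module $H^d_{\m}(R)$ via Matlis/graded duality. The key dictionary is that, since $J=(f_1,\dots,f_d)$ is generated by a full system of parameters, the socle-type element $\eta$ of $H^d_{\m}(R)$ represented by $[z + (f_1^{m},\dots,f_d^{m})]$ for suitable $z$ detects the containment $\a^r\subseteq J^{[q]}$: concretely, $\a^r\not\subseteq (f_1^q,\dots,f_d^q)=J^{[q]}$ translates, through the direct limit description $H^d_{\m}(R)=\varinjlim R/(f_1^t,\dots,f_d^t)$, into the nonvanishing of $\a^r\eta^q$ in $H^d_{\m}(R)$. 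Likewise, $\tau(\omega_R,\a^t)=\Ann_{\omega_R}(0^{*\a^t}_{H^d_{\m}(R)})$ means, by duality, that $\tau(\omega_R,\a^t)\subseteq J\omega_R$ is equivalent to the image of the natural surjection $H^d_{\m}(R/J)\to H^d_{\m}(R)$ (or rather the relevant annihilator submodule) lying inside $0^{*\a^t}_{H^d_{\m}(R)}$.

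First I would establish $\mathrm{fjn}^J(\omega_R,\a)\le \pt_+^J(\a)$. Set $c^+:=\pt_+^J(\a)$. By Remark~\ref{rem2}(5), for every $q_0$ we have $\a^{\lceil c^+ q\rceil + q/q_0}\subseteq J^{[q]}$ for all large $q$. Using the $F$-rationality of $R$ away from $\a$, I would invoke the existence of a parameter $\a^t$-test element supported off $\a$ — built exactly as in the proof of Theorem~\ref{fpurity} by collecting splittings $f_P\colon {}^1R\to R$ with $f_P(1)=u_P\notin P$ for all $P\not\supseteq\a$, so that some power of $\a$ lies in the ideal generated by the $u_P$'s. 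Combining the uniform containment $\a^{\lceil c^+ q\rceil + q/q_0}\subseteq J^{[q]}$ with such a test element, one checks that any $\xi\in H^d_{\m}(R)$ annihilated by $\omega_R$-duality against $\tau(\omega_R,\a^t)$ for $t>c^+$ satisfies $c'\a^{\lceil tq\rceil}\xi^q=0$ for large $q$, i.e. lies in $0^{*\a^t}_{H^d_{\m}(R)}$; dualizing gives $\tau(\omega_R,\a^t)\subseteq J\omega_R$, hence $\mathrm{fjn}^J(\omega_R,\a)\le t$ for all $t>c^+$, and then $\le c^+$ by the discreteness in Remark after Definition~\ref{parameter $F$-jumping}.

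For the reverse inequality $\pt_+^J(\a)\le \mathrm{fjn}^J(\omega_R,\a)$, set $\lambda:=\mathrm{fjn}^J(\omega_R,\a)$, so $\tau(\omega_R,\a^{\lambda})\subseteq J\omega_R$. I would take $r$ with $\a^r\not\subseteq J^{[q]}$, realize this via a nonzero element $\a^r\eta^q\ne 0$ in $H^d_{\m}(R)$ as above, and argue that for $r$ slightly larger than $\lambda q$ one would force $\eta$ (or an associated element) into $0^{*\a^{\lambda}}_{H^d_{\m}(R)}$: indeed if $\a^{\lceil \lambda q\rceil + (\text{small})}\not\subseteq J^{[q]}$ for infinitely many $q$, then choosing a parameter test element $c$ one builds an element of $H^d_{\m}(R)$ witnessing $c\a^{\lceil \lambda q\rceil}\xi^q=0$ for all $q$ yet $\xi\notin$ the submodule dual to $J\omega_R$, contradicting $\tau(\omega_R,\a^{\lambda})\subseteq J\omega_R$. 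Hence $\nu_{\a}^J(q)\le \lceil\lambda q\rceil + o(q)$ for all large $q$, giving $\pt_+^J(\a)\le\lambda$. Combining both directions yields $\mathrm{fjn}^J(\omega_R,\a)=\pt_+^J(\a)$.

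The main obstacle I anticipate is the careful bookkeeping in the duality translation: making precise, for a system of parameters rather than a regular sequence, exactly which element of $H^d_{\m}(R)$ controls $\a^r\subseteq J^{[q]}$, and checking that the parameter $\a^t$-test element interacts correctly with Frobenius powers of that element (the subtlety being that $R$ itself need not be Cohen-Macaulay, so $J^{[q]}$ need not behave like a complete intersection, and one must work in $H^d_{\m}(R)$ throughout rather than in $R$). The $F$-rationality hypothesis away from $\a$ is what guarantees $0^*_{H^d_{\m}(R_P)}=0$ for $P\not\supseteq\a$ and supplies the test element supported off $\a$, exactly paralleling the role of $F$-purity in Theorem~\ref{fpurity}; verifying this localization statement and that test elements can be chosen with the required support is the technical heart of the argument.
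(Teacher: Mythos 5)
Your overall framework is the right one and matches the paper's: translate both invariants into statements about $H^d_{\m}(R)$ via the duality $\tau(\omega_R,\a^t)\subseteq J\omega_R \Leftrightarrow (0:_{H^d_{\m}(R)}J\omega_R)\subseteq 0^{*\a^t}_{H^d_{\m}(R)}$, and use the $F$-rationality-off-$\a$ hypothesis to produce parameter $\a^t$-test elements in a power of $\a$ (the paper cites \cite[Lemma 6.8]{ST} for precisely this). But the details as written would not carry through, for two concrete reasons.

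First, you have the difficulty misallocated. The direction $\mathrm{fjn}^J(\omega_R,\a)\le \pt_+^J(\a)$ needs no test element at all and no $F$-rationality: for $\eta\in(0:_{H^d_{\m}(R)}J\omega_R)=(0:_{H^d_{\m}(R)}J)$ one has $J^{[q]}\eta^q=0$ outright, and since $\a^{\lceil tq\rceil}\subseteq J^{[q]}$ for $t>\pt_+^J(\a)$ and $q\gg 0$, one gets $\eta\in 0^{*\a^t}$ directly. Your version inserts a test element where none is needed and phrases the starting element as "annihilated by $\omega_R$-duality against $\tau(\omega_R,\a^t)$" — but that set is exactly $0^{*\a^t}$, which makes the argument circular. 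The hard direction is $\mathrm{fjn}^J(\omega_R,\a)\ge\pt_+^J(\a)$, and there your contradiction runs the wrong way: you propose to build $\xi$ with $c\a^{\lceil\lambda q\rceil}\xi^q=0$ (i.e., $\xi\in 0^{*\a^\lambda}$) yet $\xi\notin(0:_{H^d_\m}J\omega_R)$. That is perfectly consistent with $\tau(\omega_R,\a^\lambda)\subseteq J\omega_R$ and contradicts nothing; the containment you would need to violate is $(0:_{H^d_\m}J)\subseteq 0^{*\a^\lambda}$, which requires an element in $(0:_{H^d_\m}J)$ and \emph{outside} $0^{*\a^\lambda}$.

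Second, the technical ingredient you flag as the "main obstacle" is exactly the step you leave unaddressed, and it is where the argument would break without a new idea. The paper starts from the specific socle element $\xi=[1/(x_1\cdots x_d)]\in(0:_{H^d_\m}J)$; the hypothesis $\tau(\omega_R,\a^t)\subseteq J\omega_R$ puts $\xi\in 0^{*\a^t}$, which in the direct-limit description of $H^d_\m(R)$ only gives $c\a^{\lceil tq\rceil}(x_1\cdots x_d)^s\subseteq(x_1^{q+s},\dots,x_d^{q+s})$ for some $s$. Because $R$ is not assumed Cohen--Macaulay, you cannot cancel the $(x_1\cdots x_d)^s$; the bridge back to a genuine containment in $R$ is the colon-capturing property of tight closure \cite[Theorem 7.15(a)]{HH}, which yields $c\a^{\lceil tq\rceil}\subseteq(J^{[q]})^*$. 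Only then do the parameter $\a^t$-test elements (whose existence in a power of $\a$ is what $F$-rationality away from $\a$ supplies) give $\a^{\lceil tq\rceil+n}\subseteq J^{[q]}$ and hence $t\ge\pt_+^J(\a)$. Without colon-capturing this direction does not close, so the proposal has a genuine gap even though the general strategy is the intended one.
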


\begin{proof}
Suppose that $J$ is generated by a full system of parameters $x_1, \dots, x_d$ for $R$. 
Let $\xi=\left[\frac{1}{x_1 \cdots x_d}\right] \in H^d_{\m}(R)$. It is clear that $\xi \in (0 :_{H^d_{\m}(R)} J \omega_R)$. 

Suppose that $\tau(\omega_R, \a^t) \subseteq J \omega_R$.
Since 
$\xi \in (0:_{H^d_{\m}(R)} J\omega_R) \subseteq 0^{*\a^t}_{H^d_{\m}(R)}$, 
there exists $c \in R^{\circ}$ such that  
$c \a^{\lceil tq \rceil}\xi^q=0$ in $H^d_{\m}(R)$ 
for all large $q=p^e$.  
This implies that there exists $s \in \N$ such that 
$c\a^{\lceil tq \rceil}(x_1 \cdots x_d)^s \subseteq (x_1^{q+s}, \dots, x_d^{q+s})$. 
We then use the colon-capturing property of tight closure \cite[Theorem 7.15 (a)]{HH} to have $c\a^{\lceil tq \rceil} \subseteq (x_1^q, \dots, x_d^q)^*$. 
For any parameter test element $c' \in R^{\circ}$, one has 
$cc' \a^{\lceil tq \rceil} \subseteq (x_1^q, \dots, x_d^q)=J^{[q]}$ for all large $q=p^e$, so that $1 \in J^{*\a^t}$. 
By the definition of parameter $\a^t$-test elements, $c''\a^{\lceil tq \rceil} \subseteq J^{[q]}$ for all parameter $\a^t$-test elements $c'' \in R^{\circ}$ and for all $q=p^e$. 
Here note that it follows from \cite[Lemma 6.8]{ST} there exists an integer $n \ge 1$ such that every element of $\a^n \cap R^{\circ}$ is a parameter $\a^t$-test element. 
Therefore $\a^{\lceil tq \rceil+n} \subseteq J^{[q]}$, that is, $\nu^J_{\a}(q) \le \lceil tq \rceil +n-1$ for all $q=p^e$. 
By dividing by $q$ and taking the limit as $q$ goes to the infinity, we have $t \geq \pt_{+}^J(\a)$. 
Thus, $\mathrm{fjn}^J(\omega_R, \a)\geq \pt_{+}^J(\a)$. 

To prove the converse inequality, suppose that $t >\pt_{+}^J(\a)$. 
Fix any $\eta \in (0 :_{H^d_{\m}(R)} J\omega_R)$. 
Here note that $(0 :_{H^d_{\m}(R)} J\omega_R)=(0:_{H^d_{\m}(R)} J)$, because $\omega_R \times H^d_{\m}(R) \to H^d_{\m}(\omega_R)$ is the duality pairing. 
Hence $J^{[q]}\eta^q=0$ for all $q=p^e$. 
Since $\a^{\lceil tq \rceil} \subseteq J^{[q]}$ for all large $q=p^e$ by the definition of $\pt_{+}^J(\a)$, 
one has $\a^{\lceil tq \rceil}\eta^q=0$ for all large $q=p^e$, that is, $\eta \in 0^{*\a^t}_{H^d_{\m}(R)}$.
Thus, $(0 :_{H^d_{\m}(R)} J\omega_R) \subseteq 0^{*\a^t}_{H^d_{\m}(R)}$, or equivalently, $J\omega_R \supseteq \tau(\omega_R, \a^t)$. 
Taking infimum of such $t$'s, we conclude that $\mathrm{fjn}^J(\omega_R, \a) \leq \pt_{+}^J(\a)$.  
\end{proof}

\begin{ques}
Does the equality in Theorem \ref{characterization} still hold true if $V(\a)$ does not contain the non-$F$-rational locus of $\Spec R$? 
We have seen in the proof of Theorem \ref{characterization} that the inequality $\mathrm{fjn}^J(\omega_R, \a)\leq \pt_{+}^J(\a)$ holds even in this case. 
\end{ques}

We can replace the $F$-threshold $\pt^J(\a)$ with the $F$-jumping number $\mathrm{fjn}^J(\omega_R, \a)$ in the inequality of Corollary \ref{maincor}. 
The following is a corollary of Theorem \ref{mainthmcharp}. 

\begin{cor}\label{maincor2}
Let $R=\bigoplus_{n \ge 0} R_n$ be a $d$-dimensional Noetherian equidimensional reduced graded ring with $R_0$ a field of characteristic $p>0$.
Suppose that $\a$ $($resp. $J)$ is an ideal generated by a full homogeneous system of
parameters of degrees $a_1 \leq \dots \leq a_d$ $($resp. $b_1 \leq \dots \leq b_d)$ for $R$.  
Then 
$$e(\a) \ge \left(\frac{d}{\mathrm{fjn}^J(\omega_R, \a)}\right)^d e(J).$$
If the equality holds in the above, then $(a_1, \dots, a_d)$ and $(b_1, \dots, b_d)$ are proportional, that is, $a_1/b_1=\dots=a_d/b_d$. 
\end{cor}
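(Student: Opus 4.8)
The plan is to deduce the inequality from Theorem \ref{mainthmcharp}, applied to the Frobenius powers $J^{[q]}$, $q=p^{e}$, by running the first half of the proof of Theorem \ref{characterization} — the part that uses no $F$-rationality hypothesis. We may assume $\mathrm{fjn}^{J}(\omega_{R},\a)<\infty$, as otherwise the asserted inequality is trivial. Set $t=\mathrm{fjn}^{J}(\omega_{R},\a)$, so that $\tau(\omega_{R},\a^{t})\subseteq J\omega_{R}$ (the defining infimum is attained), write $J=(x_{1},\dots,x_{d})$ with the $x_{i}$ a homogeneous system of parameters, and let $a_{1}$ be the least degree of a generator of $\a$.

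By graded Matlis duality, exactly as in the proof of Theorem \ref{characterization}, the containment $\tau(\omega_{R},\a^{t})\subseteq J\omega_{R}$ is equivalent to $(0:_{H^{d}_{\m}(R)}J\omega_{R})\subseteq 0^{*\a^{t}}_{H^{d}_{\m}(R)}$. Since $\xi:=[1/(x_{1}\cdots x_{d})]$ lies in $(0:_{H^{d}_{\m}(R)}J\omega_{R})$, there is a homogeneous element $c\in R^{\circ}$, of degree $n$ say (the ideal of such multipliers is homogeneous, so graded prime avoidance applies), with $c\a^{\lceil tq\rceil}\xi^{q}=0$ in $H^{d}_{\m}(R)$ for all large $q$; writing $H^{d}_{\m}(R)=\varinjlim R/(x_{1}^{m},\dots,x_{d}^{m})$ and invoking the colon-capturing property of tight closure (\cite[Theorem 7.15 (a)]{HH}, valid since $R$ is excellent and equidimensional), this gives $c\,\a^{\lceil tq\rceil}\subseteq(x_{1}^{q},\dots,x_{d}^{q})^{*}=(J^{[q]})^{*}$ for all large $q$. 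Next I pass to each minimal prime $P_{i}$ of $R$; these all satisfy $\dim R/P_{i}=d$ since $R$ is equidimensional, and $R/P_{i}$ is a graded domain over the field $R_{0}$. Persistence of tight closure (and $c\notin P_{i}$, as $c\in R^{\circ}$) gives $\bar c\,\a_{i}^{\lceil tq\rceil}\subseteq(J_{i}^{[q]})^{*}$, where bar and subscript $i$ denote images in $R/P_{i}$; and since $J_{i}^{[q]}$ is generated by a homogeneous system of parameters of the domain $R/P_{i}$, its tight closure coincides with its graded plus closure, $(J_{i}^{[q]})^{*}=J_{i}^{[q]}(R/P_{i})^{+\mathrm{gr}}\cap(R/P_{i})=(J^{[q]}R/P_{i})^{+\mathrm{gr}}$ (one inclusion is \cite{HH2}; the other follows from Theorem \ref{rplus}, since $(R/P_{i})^{+\mathrm{gr}}$ is then a big Cohen--Macaulay algebra over $R/P_{i}$). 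Hence $c\,\a^{\lceil tq\rceil}(R/P_{i})\subseteq(J^{[q]}R/P_{i})^{+\mathrm{gr}}$ for all $i$. Finally, $c$ itself is not absorbed: if $\bar c\in(J_{i}^{[q]})^{*}$ held for all large $q$, then multiplying by a test element of $R/P_{i}$ would put $\bar c\in\bigcap_{q}J_{i}^{[q]}=(0)$, forcing $c\in P_{i}$; as the ideals $(J_{i}^{[q]})^{*}$ decrease with $q$, it follows that for all large $q$ and all $i$ we have $c(R/P_{i})\not\subseteq(J^{[q]}R/P_{i})^{+\mathrm{gr}}$. (Taking $t=0$, the same reasoning shows $\xi\notin 0^{*}_{H^{d}_{\m}(R)}$, so $\mathrm{fjn}^{J}(\omega_{R},\a)>0$ and the right-hand side of the corollary is meaningful.)

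Now Theorem \ref{mainthmcharp}, applied with its ``$J$'' taken to be the full homogeneous system of parameters $J^{[q]}$, with $\a$ as is, with the homogeneous element $c$ of degree $n$, and with $N=\lceil tq\rceil-1$, yields
$$e(\a)\ \ge\ \left(\frac{a_{1}d}{a_{1}(\lceil tq\rceil-1)+a_{1}d+n}\right)^{d}e(J^{[q]})$$
for all large $q=p^{e}$. By Proposition \ref{mult} one has $e(J^{[q]})=q^{d}e(J)$, so the right-hand side equals $\bigl(a_{1}qd/(a_{1}\lceil tq\rceil-a_{1}+a_{1}d+n)\bigr)^{d}e(J)$, which converges to $(d/t)^{d}e(J)=(d/\mathrm{fjn}^{J}(\omega_{R},\a))^{d}e(J)$ as $q\to\infty$. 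Hence $e(\a)\ge(d/\mathrm{fjn}^{J}(\omega_{R},\a))^{d}e(J)$. For the equality statement I would argue as in Remark \ref{equality}: were $(a_{1},\dots,a_{d})$ and $(b_{1},\dots,b_{d})$ not proportional, the arithmetic--geometric-mean comparison at the end of the proof of Theorem \ref{mainthmcharp} would improve the estimate by a fixed factor $1+\epsilon$ with $\epsilon>0$ independent of $q$, and the limit above would then give $e(\a)\ge(1+\epsilon)(d/\mathrm{fjn}^{J}(\omega_{R},\a))^{d}e(J)$, contradicting equality.

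I expect the main obstacle to be the second paragraph: extracting from the single hypothesis $\tau(\omega_{R},\a^{t})\subseteq J\omega_{R}$ one fixed homogeneous multiplier $c\in R^{\circ}$ witnessing $c\,\a^{\lceil tq\rceil}\subseteq(J^{[q]})^{*}$ for every large $q$ at once, together with the use of the identification of the tight closure of a parameter ideal in a positive-characteristic graded domain with its graded plus closure — this is exactly what makes Theorem \ref{mainthmcharp} applicable, and, notably, no $F$-rationality hypothesis is needed here, in contrast with Theorem \ref{characterization}. The remaining ingredients — the Matlis-duality reformulation, colon capturing, persistence of tight closure, the identity $e(J^{[q]})=q^{d}e(J)$, and the passage to the limit — are routine.
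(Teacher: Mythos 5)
Your proof is correct and follows the same overall strategy as the paper: run the forward half of the argument in Theorem~\ref{characterization} to produce one fixed homogeneous multiplier $c$ that works for all large $q$, feed this into Theorem~\ref{mainthmcharp} with $N=\lceil tq\rceil-1$, and pass to the limit using $e(J^{[q]})=q^d e(J)$. The one place you genuinely diverge is after reaching $c\,\a^{\lceil tq\rceil}\subseteq (J^{[q]})^{*}$. The paper multiplies by a single homogeneous parameter test element $c'$ (which exists because $R$ is excellent and reduced, and can be chosen homogeneous since the test ideal is homogeneous) to land directly in $J^{[q]}$ itself, and then applies Theorem~\ref{mainthmcharp} with the element $cc'$. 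You instead pass to each $R/P_i$ and invoke the identification of the tight closure of a homogeneous parameter ideal in a graded domain with its graded plus closure. Both routes deliver exactly the hypotheses of Theorem~\ref{mainthmcharp} with the same $N$ and essentially the same degree $n$, and both give the same limiting estimate, so your proof is sound — but your route leans on a strictly stronger result than is needed, and the attribution is a bit off: the inclusion $(J_i^{[q]})^{*}\subseteq (J_i^{[q]})^{+\mathrm{gr}}$ does \emph{not} follow formally from Theorem~\ref{rplus} (the big Cohen--Macaulay property of $(R/P_i)^{+\mathrm{gr}}$ alone); it is the graded analogue of K.~Smith's ``$I^{*}=I^{+}$ for parameter ideals'' theorem and is one of the substantive results of \cite{HH2}, not a one-line consequence of regular sequences. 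The paper's test-element shortcut avoids needing this. Two small positives in your write-up worth keeping: you explicitly verify that the hypothesis $c(R/P_i)\not\subseteq (J^{[q]}R/P_i)^{+\mathrm{gr}}$ of Theorem~\ref{mainthmcharp} holds for $q\gg 0$ via Krull-type intersection in the big Cohen--Macaulay algebra, and you note that $\mathrm{fjn}^{J}(\omega_R,\a)>0$ so the displayed inequality is meaningful — both points the paper silently elides.
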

\begin{proof}
Suppose that $\tau(\omega_R, \a^t) \subseteq J \omega_R$. 
By the same argument as the proof of Theorem \ref{characterization}, there exists a homogeneous element $c \in R^{\circ}$ of degree $n$ such that $c\a^{\lceil tq \rceil} \subseteq J^{[q]}$ for all $q=p^e$. 
Here note that each $J^{[q]}$ is again generated by a full homogeneous system of parameters for $R$.  
It follows from Theorem \ref{mainthmcharp} that for all large $q=p^e$, we have
$$e(\a)\geq\left(\frac{a_1d}{a_1\lceil tq \rceil+a_1d+n-a_1}\right)^de(J^{[q]})=\left(\frac{a_1dq}{a_1\lceil tq \rceil+a_1d+n-a_1}\right)^d e(J).$$ 
The right-hand side converges to $\left(\frac{d}{t}\right)^d e(J)$ as $q$ goes to the infinity.
Taking infimum of such $t$'s, we obtain the inequality in the corollary. 

The latter assertion follows from a similar argument to Remark \ref{equality}. 
\end{proof}

In general, we cannot replace $\mathrm{fjn}^J(\omega_R, \a)$ by 
$$\mathrm{fjn}^J(R, \a)=\mathrm{fjn}^J(\a):=\inf\{t \ge 0 \mid \tau(R, \a^t) \subseteq J\}$$
in the inequality of Corollary \ref{maincor2} (see \cite{HY} for the definition of generalized test ideals $\tau(\a^t)$). 

\begin{eg}\label{3rd veronese}
Let $S=\F_p[x,y]$ be the two-dimensional polynomial ring over the finite field $\F_p$ and $R=S^{(3)}$ be the third Veronese subring of $S$. 
Let $J=(x^3, y^3)$ be a parameter ideal of $R$ and $\m_S=(x,y)$ (resp. $\m_R=(x^3, x^2y, xy^2, y^3)$) be a maximal ideal of $S$ (resp. $R$). 
Note that 
$$\tau(R, J^t)=\tau(R, \m_R^t)=\tau(S, \m_S^{3t}) \cap R=\m_S^{\lfloor 3t \rfloor-1} \cap R$$ 
for $t \geq 1/3$, where the second equality follows from \cite[Theorem 3.3]{HT}. 
Since $\m_R^2 \subseteq J$, we have $\mathrm{fjn}^J(J)=5/3 < 2=\pt^J(J)=\mathrm{fjn}^J(\omega_R, \a)$.
\end{eg}

We can define a geometric analogue of $F$-jumping numbers. 
\begin{defn}\label{jump def}
Assume that $R$ is a normal domain of essentially of finite type over a field of characteristic zero, and let $\a$ be an ideal of $R$ such that $\a \cap R^{\circ} \ne \emptyset$. 
\begin{enumerate}[{(}i{)}]
\item (\cite[Definition 2]{Bl})
Let $\pi: \widetilde{X} \to X:=\Spec R$ be a log resolution of $\a$ such that $\a \sO_{\widetilde{X}}=\sO_{\widetilde{X}}(-F)$. 
For every real number $t \ge 0$, the \textit{multiplier submodule} $\J(\omega_R, \a^t)$ associated to the pair $(R,\a^t)$ is defined to be
$$\J(\omega_R, \a^t)=\pi_*(\omega_{\widetilde{X}} \otimes \sO_{\widetilde{X}}(\lceil -tF \rceil)) \subseteq \omega_R.$$

\item For every ideal $J \subseteq R$ such that $\a \subseteq \sqrt{J}$, the \textit{jumping number} $\lambda^J(\omega_R, \a)$ of $\a \omega_R$ with respect to $J \omega_R$ is defined to be
$$\lambda^J(\omega_R, \a)=\inf\{t \ge 0 \mid \J(\omega_R, \a^t) \subseteq J \omega_R\}.$$
\end{enumerate}
\end{defn}

By virtue of \cite[Remark 6.12]{ST}, we obtain the following correspondence between jumping numbers and $F$-jumping numbers. 
\begin{prop}\label{correspondence}
Let $R, \a, J$ be as in Definition \ref{jump def} and $(R_p, \a_p, J_p)$ be a reduction of $(R,\a, J_p)$ to sufficiently large characteristic $p \gg 0$. 
Then we have the followings. 

\begin{enumerate}[${(}1{)}$]
\item $\mathrm{fjn}^{J_p}(\omega_{R_p}, \a_p) \le \lambda^J(\omega_R, \a)$. 

\item $\lim_{p \to \infty} \mathrm{fjn}^{J_p}(\omega_{R_p}, \a_p)=\lambda^J(\omega_R, \a)$. 

\end{enumerate}
\end{prop}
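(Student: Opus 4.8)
The statement to prove is Proposition \ref{correspondence}, which asserts a comparison between the $F$-jumping number $\mathrm{fjn}^{J_p}(\omega_{R_p}, \a_p)$ and the geometric jumping number $\lambda^J(\omega_R, \a)$: first the inequality $\mathrm{fjn}^{J_p}(\omega_{R_p}, \a_p) \le \lambda^J(\omega_R, \a)$ for $p \gg 0$, and then the limit statement $\lim_{p\to\infty}\mathrm{fjn}^{J_p}(\omega_{R_p}, \a_p) = \lambda^J(\omega_R, \a)$.

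The plan is to reduce both statements to the known correspondence between multiplier submodules and generalized parameter test submodules under reduction mod $p$, which is \cite[Remark 6.12]{ST}. That result should say that for a fixed real number $t \ge 0$, after reducing to characteristic $p \gg 0$ (where the allowable $p$ may depend on $t$), one has $\tau(\omega_{R_p}, \a_p^t) = \J(\omega_R, \a^t)_p$, the reduction mod $p$ of the multiplier submodule; and moreover for $t$ slightly less than a jumping number one gets the analogous equality or at least containment behavior. First I would fix $t > \lambda^J(\omega_R, \a)$, so that $\J(\omega_R, \a^t) \subseteq J\omega_R$ in characteristic zero. Spreading out and reducing mod $p$, for $p \gg 0$ (depending on this fixed $t$) we get $\tau(\omega_{R_p}, \a_p^t) = \J(\omega_R,\a^t)_p \subseteq J_p\omega_{R_p}$, whence $\mathrm{fjn}^{J_p}(\omega_{R_p}, \a_p) \le t$. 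Since one can take $t$ approaching $\lambda^J(\omega_R, \a)$ from above, and since $F$-jumping numbers are rational with bounded denominators (or more simply, since $\lambda^J(\omega_R,\a)$ itself is rational and one can take $t = \lambda^J(\omega_R,\a)$ using part (1) of the Remark \ref{parameter $F$-jumping} remark if $\J(\omega_R,\a^{\lambda}) \subseteq J\omega_R$), this gives part (1): $\mathrm{fjn}^{J_p}(\omega_{R_p}, \a_p) \le \lambda^J(\omega_R, \a)$ for $p \gg 0$.

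For part (2) I would establish the reverse asymptotic inequality $\liminf_{p\to\infty} \mathrm{fjn}^{J_p}(\omega_{R_p}, \a_p) \ge \lambda^J(\omega_R, \a)$. Here I would fix $t < \lambda^J(\omega_R, \a)$, so $\J(\omega_R, \a^t) \not\subseteq J\omega_R$. Choosing a homogeneous (or just a single) element $g \in \J(\omega_R, \a^t) \setminus J\omega_R$ and spreading out, for $p \gg 0$ we have $g_p \in \tau(\omega_{R_p}, \a_p^t) = \J(\omega_R, \a^t)_p$ and $g_p \notin J_p\omega_{R_p}$ — the latter using that the property $g \notin J\omega_R$ is preserved under reduction mod $p \gg 0$ (a standard generic-freeness/openness argument applied to the cokernel of $J\omega_R \hookrightarrow \omega_R$). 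Thus $\tau(\omega_{R_p}, \a_p^t) \not\subseteq J_p\omega_{R_p}$, giving $\mathrm{fjn}^{J_p}(\omega_{R_p}, \a_p) > t$ for all $p \gg 0$. Letting $t \uparrow \lambda^J(\omega_R, \a)$ yields $\liminf_p \mathrm{fjn}^{J_p}(\omega_{R_p}, \a_p) \ge \lambda^J(\omega_R, \a)$. Combining with part (1), which gives $\limsup_p \mathrm{fjn}^{J_p}(\omega_{R_p}, \a_p) \le \lambda^J(\omega_R, \a)$, the limit exists and equals $\lambda^J(\omega_R, \a)$.

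The main obstacle is the order of quantifiers in the reduction mod $p$: the statement $\tau(\omega_{R_p}, \a_p^t) = \J(\omega_R, \a^t)_p$ holds only for $p$ large relative to the fixed $t$, and the threshold on $p$ may degenerate as $t$ approaches $\lambda^J(\omega_R, \a)$. This is exactly why the two parts are asymmetric: part (1) only claims a bound for each $p\gg 0$ after the limit is taken in the zero-characteristic side, and part (2) is genuinely an asymptotic (limit) statement rather than a bound valid for all large $p$ simultaneously with a fixed comparison value. I would handle this carefully by keeping $t$ fixed before choosing the range of $p$, and by invoking \cite[Remark 6.12]{ST} as a black box for the key equality $\tau(\omega_{R_p}, \a_p^t) = \J(\omega_R, \a^t)_p$; the remaining ingredients (rationality of $\lambda^J$, the discreteness from Remark \ref{parameter $F$-jumping}(1), and preservation of non-containment under reduction) are standard. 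One should also double-check that the hypothesis in Definition \ref{jump def} that $R$ is normal and essentially of finite type over a field of characteristic zero, together with $\a, J$ as specified, ensures the spreading-out of the whole configuration $(R, \a, J)$ together with a log resolution of $\a$ is compatible with the reduction used in \cite[Remark 6.12]{ST}; I expect this is routine but it is the point that needs the most care.
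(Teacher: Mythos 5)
Your proof is correct and follows essentially the same route the paper takes: the paper gives no explicit argument, deriving the proposition ``by virtue of [ST, Remark 6.12],'' and your fleshed-out version --- reducing the equality $\tau(\omega_{R_p}, \a_p^t) = \J(\omega_R, \a^t)_p$ mod $p$ for fixed $t$, then tracking the order of quantifiers in $p$ versus $t$ for the two directions --- is precisely the intended elaboration. Your observation that part (1) needs right-continuity of $t \mapsto \J(\omega_R, \a^t)$ so the infimum defining $\lambda^J(\omega_R,\a)$ is attained (giving a bound on $\mathrm{fjn}$ uniform in $p\gg 0$) is the one genuinely delicate point, and you handle it correctly.
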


The following is immediate from Corollary \ref{maincor2} and Proposition \ref{correspondence} by  an argument similar to the proof of Theorem \ref{mainthm0}. 
This can be viewed as a generalization of \cite[Theorem 0.1]{dFEM} in the graded case. 
\begin{cor}\label{maincor3}
Let $R=\bigoplus_{n \ge 0} R_n$ be a $d$-dimensional Noetherian normal graded domain with $R_0$ a field of characteristic zero.
Suppose that $\a$ $($resp. $J)$ is an ideal generated by a full homogeneous system of
parameters of degrees $a_1 \leq \dots \leq a_d$ $($resp. $b_1 \leq \dots \leq b_d)$ for $R$.  
Then 
$$e(\a) \ge \left(\frac{d}{\lambda^J(\omega_R, \a)}\right)^d e(J).$$
If the equality holds in the above, then $(a_1, \dots, a_d)$ and $(b_1, \dots, b_d)$ are proportional, that is, $a_1/b_1=\dots=a_d/b_d$. 
\end{cor}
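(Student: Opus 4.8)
The plan is to deduce Corollary~\ref{maincor3} from the positive-characteristic inequality of Corollary~\ref{maincor2} together with the limiting relation between $F$-jumping numbers and multiplier-submodule jumping numbers recorded in Proposition~\ref{correspondence}, following the same reduction-to-characteristic-$p$ strategy used in the proof of Theorem~\ref{mainthm0}. Concretely, I first spread out the data: choose a finitely generated $\Z$-subalgebra $A \subseteq k$ over which $R$, the two homogeneous systems of parameters generating $\a$ and $J$, a log resolution of $\a$, and all relations (including the degrees of the parameters, which are intrinsic) are defined, and over which generic flatness makes $R_A$ free over $A$. Since $R$ is normal, after inverting finitely many elements of $A$ we may assume $R_A$ (hence $R_p$ for $p \gg 0$) is a normal domain; since $R$ is graded with $R_0 = k$ a field, $R_p = \bigoplus_{n \ge 0}(R_p)_n$ is a $d$-dimensional Noetherian normal graded domain with $(R_p)_0$ a field of characteristic $p > 0$, and the images $\a_p$, $J_p$ of $\a$, $J$ are generated by full homogeneous systems of parameters of degrees $a_1 \le \dots \le a_d$ and $b_1 \le \dots \le b_d$ respectively (the degrees are unchanged under reduction).

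Next, for $p \gg 0$ I apply Corollary~\ref{maincor2} to the triple $(R_p, \a_p, J_p)$ — the hypotheses there (equidimensional reduced graded, $R_0$ a field of characteristic $p$, $\a,J$ generated by full homogeneous systems of parameters) are all satisfied — to obtain
$$e(\a_p) \ge \left(\frac{d}{\mathrm{fjn}^{J_p}(\omega_{R_p}, \a_p)}\right)^d e(J_p).$$
By the standard behavior of Hilbert polynomials and multiplicities under reduction modulo $p$, one has $e(\a_p) = e(\a)$ and $e(J_p) = e(J)$ for $p \gg 0$; alternatively, since $\a$ and $J$ are generated by homogeneous systems of parameters, Proposition~\ref{mult} expresses $e(\a)$ and $e(J)$ purely in terms of the Poincaré series of $R$ and the degrees $a_i$, $b_i$, all of which are preserved by reduction. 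Now let $p \to \infty$: by Proposition~\ref{correspondence}(2), $\mathrm{fjn}^{J_p}(\omega_{R_p}, \a_p) \to \lambda^J(\omega_R, \a)$, so passing to the limit in the displayed inequality yields
$$e(\a) \ge \left(\frac{d}{\lambda^J(\omega_R, \a)}\right)^d e(J),$$
which is the desired inequality. (One should note $\lambda^J(\omega_R,\a) > 0$ since $\a \subseteq \sqrt J$ forces the multiplier submodule to be $\omega_R$ for small $t$, so the expression makes sense.)

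For the equality statement, I argue as follows. Suppose $e(\a) = (d/\lambda^J(\omega_R,\a))^d e(J)$. Proposition~\ref{correspondence}(1) gives $\mathrm{fjn}^{J_p}(\omega_{R_p},\a_p) \le \lambda^J(\omega_R,\a)$ for $p \gg 0$, so combined with Corollary~\ref{maincor2} in characteristic $p$ we get
$$e(\a) = e(\a_p) \ge \left(\frac{d}{\mathrm{fjn}^{J_p}(\omega_{R_p},\a_p)}\right)^d e(J_p) \ge \left(\frac{d}{\lambda^J(\omega_R,\a)}\right)^d e(J) = e(\a),$$
forcing equality $e(\a_p) = (d/\mathrm{fjn}^{J_p}(\omega_{R_p},\a_p))^d e(J_p)$ in Corollary~\ref{maincor2} for every such $p$. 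The equality clause of Corollary~\ref{maincor2} then says $(a_1,\dots,a_d)$ and $(b_1,\dots,b_d)$ are proportional — and these are exactly the degree sequences in characteristic zero. The main obstacle in writing this up carefully is the bookkeeping in the reduction step: ensuring that all the relevant structures (normality, the graded structure with $R_0$ a field, the systems of parameters remaining systems of parameters of the correct degrees, the log resolution spreading out so that $\J(\omega_R,\a^t)$ reduces compatibly) descend to a common $A$ and persist for all $p \gg 0$; this is routine but must invoke the appropriate spreading-out results (as in \cite[Appendix]{Hu}, \cite{HH3}) and the compatibility from \cite[Remark 6.12]{ST} underlying Proposition~\ref{correspondence}. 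The actual inequality manipulations, by contrast, are immediate once the characteristic-$p$ input is in place.
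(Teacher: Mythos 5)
Your proof is correct and follows exactly the route the paper indicates (the paper merely states the corollary is ``immediate from Corollary~\ref{maincor2} and Proposition~\ref{correspondence} by an argument similar to the proof of Theorem~\ref{mainthm0}''). Your write-up fleshes out the reduction-to-characteristic-$p$ bookkeeping, the passage to the limit via Proposition~\ref{correspondence}(2), and the use of Proposition~\ref{correspondence}(1) for the equality clause, all in the same spirit as the paper's sketch.
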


\begin{rem}
Let $R=\bigoplus_{n \ge 0} R_n$ be a $d$-dimensional Noetherian normal graded domain with $R_0$ a field of positive characteristic (resp. a field of characteristic zero). 
Suppose that $\a$ and $J$ are ideals each generated by a full homogeneous system of parameters for $R$. 
By a Skoda-type theorem for $\tau(\omega_R, \a^t)$ (resp. $\J(\omega_R, \a^t)$) (see \cite[Lemma 7.10 (3)]{ST}), we have
$$\tau(\omega_R, \a^t)=\a\tau(\omega_R, \a^{t-1}) \quad (\textup{resp. }\J(\omega_R, \a^t)=\a \J(\omega_R, \a^{t-1}))$$
for all real numbers $t \ge d$. 
Hence, if $\a^{N+1} \subseteq J$ for some integer $N \ge 0$, then
\begin{align*}
\tau(\omega_R, \a^{d+N})&=\a^{N+1}\tau(\omega_R, \a^{d-1}) \subseteq J \omega_R,\\
(\textup{resp. }\J(\omega_R, \a^{d+N})&=\a^{N+1}\J(\omega_R, \a^{d-1}) \subseteq J \omega_R)
\end{align*}
so that $\mathrm{fjn}^J(\omega_R, \a) \leq d+N$ (resp. $\lambda^J(\omega_R, \a) \leq d+N$). 
Thus, we can think of Corollaries \ref{maincor2} and \ref{maincor3} as a generalization of Theorem \ref{mainthm} when the ring is a normal domain. 
\end{rem}

\begin{small}
\begin{acknowledgement}
We thank Mircea  Musta\c{t}\u{a} and Ken-ichi Yoshida  for many helpful discussions concerning the material in this paper. 
We are also grateful to Hailong Dao and Kazuhiko Kurano for valuable suggestions. 
The first author was partially supported by NSF grant DMS-0756853.
The second and third and authors were partially supported by Grant-in-Aid for Scientific Research 20740019 and 20540050, respectively. 
The second author was also partially supported by Program for Improvement of Research Environment for Young Researchers from SCF commissioned by MEXT of Japan.
\end{acknowledgement}
\end{small}

\end{document}